\newtheorem{theorem}{Theorem}[section]
\newtheorem{lemma}[theorem]{Lemma}
\theoremstyle{definition}
\newtheorem{example}{Example}[section]
\newtheorem{remark}{Remark}[section]
\numberwithin{equation}{section}
\begin{document}
\title{Multiple positive solutions to a fourth order boundary value problem}
\author{Alberto Cabada, Radu Precup, Lorena Saavedra, Stepan Tersian}
\maketitle

\begin{abstract}
We study the existence and multiplicity of positive solutions for a
nonlinear fourth-order two-point boundary value problem. The approach is
based on critical point theorems in conical shells, Krasnoselskii's
compression-expansion theorem, and unilateral Harnack type inequalities.
\end{abstract}

\noindent Keywords: Fourth order differential equation; boundary value
problem; positive solution; critical point; fixed point.

\noindent Mathematics Subject Classification: 34B18, 47J30

\section{Introduction}

The fourth-order boundary value problems appear in the elasticity theory
describing stationary states of the deflection of an elastic beam. In last
decade a lot of studies are devoted to the existence of positive solutions
for such problems, applying Leray-Schauder continuation method, the
topological degree theory, the fixed point theorems on cones, the critical
point theory or the lower and upper solution method (see, for example, \cite%
{CaT, CG, CFM, GKKY, ES, LZL, LZ, Ma, YCY, WIF}).

In this article, we study the existence and multiplicity of positive
solutions for nonlinear fourth-order two-point boundary value problem with
cantilever boundary conditions. Consider the fourth-order boundary value
problem
\begin{equation}
\left\{
\begin{array}{ll}
u^{(4)}(t)-\,f(t,u(t))=0, & 0<t<1, \\
u(0)=u^{\prime }(0)=u^{\prime \prime }(1)=u^{\prime \prime \prime }(1)=0, &
\end{array}%
\right.  \label{p}
\end{equation}%
where the function $f\colon \lbrack 0,1]\times \mathbb{R}\rightarrow \mathbb{%
R}$ is continuous, and $f(t,\mathbb{R}_{+})\subset \mathbb{R}_{+}$ for all $%
t\in \lbrack 0,1].$

Our approach is based on critical point theorems for functionals in conical
shells (see \cite{Pre, Pre2}) and Krasnoselskii's compression-expansion
theorem. As one can see along the paper, the arguments developed here can be
applied to other boundary value problems associated to fourth and sixth
order differential equations. Because the estimates are connected with the
specific boundary conditions, we concentrate on the model problem (1.1).

The paper is organized as follows. In Section 2 we give formulation of
critical point theorems in conical shells and Krasnoselskii's
Compression-Expansion Theorem. We present also the variational formulation
of the problem. In Section 3, the main existence and multiplicity results
Theorems 3.2, 3.3, 3.5 and 3.6 are formulated and proved. Their proofs are
based on the mentioned above theorems and inequalities proved in Lemma 3.1
and Lemma 3.4. In order to illustrate the obtained results, two examples are
given.

\section{Preliminaries}

\subsection{Critical point theorems in conical shells}

In this subsection we introduce the results given in \cite{Pre} which we are
going to apply to the fourth order problem (\ref{p}).

We consider two real Hilbert spaces, $X$ with inner product and norm $(\cdot
,\cdot ),$ $|\cdot |,$ $H$ with inner product and norm $\left\langle \cdot
,\cdot \right\rangle ,$ $\left\Vert \cdot \right\Vert ,$ and we assume that $%
X\subset H$ with continuous injection. We identify $H$ to its dual $%
H^{\prime }$ and we obtain $X\subset H\equiv H^{\prime }\subset X^{\prime }.$
By $\left\langle \cdot ,\cdot \right\rangle $ we also denote the duality
between $X$ and $X^{\prime },$ i.e. $\left\langle x^{\ast },x\right\rangle
=x^{\ast }(x)$ for $x^{\ast }\in X^{\prime }$ and $x\in X.$ If $x^{\ast }\in
H,$ then $\left\langle x^{\ast },x\right\rangle $ is exactly the scalar
product in $H$ and $\left\langle x^{\ast },x\right\rangle =(x^{\ast }, x).$

We also consider a cone in $X,$ i.e. a convex closed nonempty set $K,$ $%
K\neq \left\{ 0\right\} ,\ $with $\lambda \,u\in K$ for every $u\in K$ and $%
\lambda \geq 0,$ and $K\cap \left( -K\right) =\left\{ 0\right\} .$ Let $\phi
\in K\setminus \left\{ 0\right\} $ be a fixed element with $\left\vert \phi
\right\vert =1.$ Then, for all numbers $R_{0},R_{1}$ with $%
0<R_{0}<\left\Vert \phi \right\Vert R_{1},$ there is $\mu >0$ such that $%
\left\Vert \mu \phi \right\Vert >R_{0}$ and $\left\vert \mu \phi \right\vert
<R_{1}.$ Denote by $K_{R_{0}R_{1}}$ the conical shell
\begin{equation*}
K_{R_{0}R_{1}}=\left\{ u\in K\ :\ \left\Vert u\right\Vert \geq R_{0},\
|u|\leq R_{1}\right\} .
\end{equation*}%
Clearly $\mu \phi $ is an interior point of $K_{R_{0}R_{1}},$ in the sense
that $\left\Vert \mu \phi \right\Vert >R_{0}$ and $\left\vert \mu \phi
\right\vert <R_{1}.$

Let $L$ be the continuous linear operator from $X$ to $X^{\prime },$ given
by
\begin{equation*}
(u,v)=\left\langle L\,u,v\right\rangle \quad \text{for all \ }u,\ v\in X\,,
\end{equation*}%
and let $J$ from $X^{\prime }$ into $X$ be the inverse of $L.$ Then
\begin{equation*}
(J\,u,v)=\left\langle u,v\right\rangle \quad \text{for\ \ }u\in X^{\prime
}\,,\ v\in X\,.
\end{equation*}

Let $E$ be a $C^{1}$ functional defined on $X.$ We say that $E$ satisfies
the \textit{modified Palais-Smale-Schechter condition} (MPSS) in $%
K_{R_{0}R_{1}},$ if any sequence $\left( u_{k}\right) $ of elements of $%
K_{R_{0}R_{1}}$ for which $\left( E\left( u_{k}\right) \right) $ converges
and one of the following conditions holds:

(i) \ $E^{\prime }\left( u_{k}\right) \rightarrow 0;$

(ii) \ $\left\Vert u_{k}\right\Vert =R_{0},\ \ \left( JE^{\prime }\left(
u_{k}\right) ,Ju_{k}\right) \geq 0$ \ and\ \ $JE^{\prime }\left(
u_{k}\right) -\frac{\left( JE^{\prime }\left( u_{k}\right) ,Ju_{k}\right) }{%
\left\vert Ju_{k}\right\vert ^{2}}Ju_{k}\rightarrow 0;$

(iii)\ \ $\left\vert u_{k}\right\vert =R_{1},\ \ \left( JE^{\prime }\left(
u_{k}\right) ,u_{k}\right) \leq 0\ \ $and\ \ $JE^{\prime }\left(
u_{k}\right) -\frac{\left( JE^{\prime }\left( u_{k}\right) ,u_{k}\right) }{%
R_{1}^{2}}u_{k}\rightarrow 0,$

\noindent has a convergent subsequence.

We say that $E$ satisfies the \textit{compression boundary condition} in $%
K_{R_{0}R_{1}}$ if%
\begin{equation}
JE^{\prime }\left( u\right) -\lambda Ju\neq 0\ \ \text{for }u\in
K_{R_{0}R_{1}},\ \left\Vert u\right\Vert =R_{0},\ \lambda >0;  \label{h1}
\end{equation}%
\begin{equation}
JE^{\prime }\left( u\right) +\lambda u\neq 0\ \ \text{for }u\in
K_{R_{0}R_{1}},\ \left\vert u\right\vert =R_{1},\ \lambda >0.  \label{h2}
\end{equation}

We say that $E$ has a \textit{mountain pass geometry} in $K_{R_{0}R_{1}}$ if
there exist $u_{0}$ and $u_{1}$ in the same connected component of $%
K_{R_{0}R_{1}}, $ and $r>0$ such that $\left\vert u_{0}\right\vert
<r<\left\vert u_{1}\right\vert $ and%
\begin{equation*}
\max \left\{ E\left( u_{0}\right) ,E\left( u_{1}\right) \right\} <\inf
\left\{ E\left( u\right) :\ u\in K_{R_{0}R_{1}},\ \left\vert u\right\vert
=r\right\} .
\end{equation*}%
In this case we consider the set
\begin{equation}  \label{e-gamma}
\Gamma =\left\{ \gamma \in C([0,1];K_{R_{0}R_{1}})\ :\ \gamma (0)=u_{0},\
\gamma (1)=u_{1}\right\} \,
\end{equation}%
and the number%
\begin{equation}  \label{e-c}
c=\inf_{\gamma \in \Gamma }\,\max_{t\in \lbrack 0,1]}E(\gamma (t))\,.
\end{equation}

Finally, we say that $E$ is \textit{bounded from below} in $K_{R_{0}R_{1}}$
if
\begin{equation}  \label{e-m}
m:=\inf_{u\in K_{R_{0}R_{1}}}E\left( u\right) >-\infty .
\end{equation}

We assume that the following conditions are satisfied:

\begin{equation}
\left( I-JE^{\prime }\right) \left( K\right) \subset K\ \ \ (I\ \text{is the
identity map on\ }X);  \label{c1}
\end{equation}%
and there exists a constant $\nu _{0}>0$ such that
\begin{equation}
\left( JE^{\prime }\left( u\right) ,Ju\right) \leq \nu _{0}\ \ \text{for
all\ \ }u\in K\ \ \text{with\ \ }\left\Vert u\right\Vert =R_{0};  \label{b1}
\end{equation}%
\begin{equation}
\left( JE^{\prime }\left( u\right) ,u\right) \geq -\nu _{0}\ \ \text{for
all\ \ }u\in K\ \ \text{with\ \ }\left\vert u\right\vert =R_{1}.  \label{b2}
\end{equation}
The following theorems of localization of critical points in a conical shell
appear as slight particularizations of the main results from \cite{Pre, Pre2}%
.

\begin{theorem}
\label{t1} Assume that $E$ is bounded from bellow in $K_{R_{0}R_{1}}$ and
that there is a $\rho >0$ with
\begin{equation}
E\left( u\right) \geq m+\rho  \label{r11}
\end{equation}%
\emph{(}$m$ given in \emph{\eqref{e-m})} for all $u\in K_{R_{0}R_{1}}$ which
simultaneously satisfy $\left\vert u\right\vert =R_{1},\ \left\Vert
u\right\Vert =R_{0}.$ In addition assume that $E$ satisfies the (MPSS)
condition and the compression boundary condition in $K_{R_{0}R_{1}}.$ Then
there exists $u\in K_{R_{0}R_{1}}$ such that
\begin{equation*}
E^{\prime }\left( u\right) =0\ \ \ \text{and\ \ \ }E\left( u\right) =m.
\end{equation*}
\end{theorem}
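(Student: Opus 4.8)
The plan is to realize $u$ as a global minimizer of $E$ over the conical shell and then to show, via the compression boundary condition, that this \emph{constrained} minimizer is actually a \emph{free} critical point. Since $K_{R_{0}R_{1}}$ is closed in $X$, it is a complete metric space for the distance induced by $\vert\cdot\vert$, and $E$ is continuous and bounded from below on it with $\inf E=m$ given in \eqref{e-m}. I would first apply Ekeland's variational principle to produce a sequence $(u_{n})\subset K_{R_{0}R_{1}}$ with $E(u_{n})\to m$ and the almost-minimality property $E(u_{n})\le E(v)+\tfrac1n\vert v-u_{n}\vert$ for every $v\in K_{R_{0}R_{1}}$. The whole point is then to read off from this inequality that $(u_{n})$ is one of the three kinds of sequences allowed by the (MPSS) condition, after which compactness and a limit argument finish the job. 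Throughout, condition \eqref{c1} is decisive for handling the cone: since $u_{n}-JE'(u_{n})=(I-JE')(u_{n})\in K$, the segment $u_{n}-t\,JE'(u_{n})=(1-t)u_{n}+t(I-JE')(u_{n})$ lies in $K$ for $t\in[0,1]$, so the descent direction $-JE'(u_{n})$ always respects the cone.

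Next I would localize $u_{n}$. By \eqref{r11}, continuity of $E$, and $E(u_{n})\to m<m+\rho$, the terms $u_{n}$ stay away from the corner $\{\Vert u\Vert=R_{0},\ \vert u\vert=R_{1}\}$ for large $n$; passing to a subsequence I may assume all $u_{n}$ lie in a single regime. If $u_{n}$ is interior to the shell, or one sphere is critical but $-JE'(u_{n})$ points strictly back inside (i.e. $(JE'(u_{n}),Ju_{n})<0$ on $\Vert u_{n}\Vert=R_{0}$, or $(JE'(u_{n}),u_{n})>0$ on $\vert u_{n}\vert=R_{1}$), then $v=u_{n}-t\,JE'(u_{n})\in K_{R_{0}R_{1}}$ for small $t$, and a first-order Taylor expansion converts the Ekeland inequality into $\vert JE'(u_{n})\vert\le\tfrac1n$, which is alternative (i). On the critical inner sphere $\Vert u_{n}\Vert=R_{0}$ with $(JE'(u_{n}),Ju_{n})\ge0$ I would instead test against the tangent-corrected direction $\tilde w_{n}=-\bigl(JE'(u_{n})-\tfrac{(JE'(u_{n}),Ju_{n})}{\vert Ju_{n}\vert^{2}}Ju_{n}\bigr)$ (and the analogous one with $u_{n}$, $R_{1}^{2}$ replacing $Ju_{n}$, $\vert Ju_{n}\vert^{2}$ on the outer sphere $\vert u_{n}\vert=R_{1}$ with $(JE'(u_{n}),u_{n})\le0$). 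Using $\langle E'(u_{n}),w\rangle=(JE'(u_{n}),w)$ one checks the identity $(JE'(u_{n}),\tilde w_{n})=-\vert\tilde w_{n}\vert^{2}$, so the Ekeland inequality forces $\vert\tilde w_{n}\vert\le\tfrac1n\to0$, which is precisely alternative (ii) (respectively (iii)); the sign hypotheses packaged in (ii)–(iii) are exactly the cases where the uncorrected direction would leave the shell, and \eqref{b1}, \eqref{b2} keep the correction coefficients bounded.

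The delicate point — and what I expect to be the main obstacle — is the genuine feasibility of the corrected directions: that $v=u_{n}+t\tilde w_{n}$ really stays in $K_{R_{0}R_{1}}$ and not merely tangentially. The shell membership requires absorbing the first-order tangency (the $\Vert\cdot\Vert$- or $\vert\cdot\vert$-derivative along $\tilde w_{n}$ vanishes) by a small inward second-order adjustment, which is routine but must be carried out. Staying in the cone is easy on the outer sphere: there $\tilde w_{n}=-JE'(u_{n})-\vert c\vert u_{n}$ with $\vert c\vert\ge0$, and one factors $v=(1-t\vert c\vert)\,\bigl[u_{n}-\tau JE'(u_{n})\bigr]$ with $\tau\in[0,1]$, so \eqref{c1} and the cone property apply. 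It is genuinely subtle on the inner sphere, where the correction $+c\,Ju_{n}$ with $c\ge0$ need not lie in $K$; resolving it needs the order structure of the resolvent, namely $J(K)\subset K$, which gives $Ju_{n}\in K$ and hence $u_{n}-tJE'(u_{n})+tc\,Ju_{n}\in K$ as a sum of elements of the convex cone $K$. This positivity of $J$ is exactly the property that holds for the Green operator in the intended application, and it is the technical heart of the boundary analysis.

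Once feasibility is secured, the (MPSS) condition supplies a subsequence $u_{n}\to u\in K_{R_{0}R_{1}}$, and continuity gives $E(u)=m$. In case (i) one passes to the limit in $E'(u_{n})\to0$ to obtain $E'(u)=0$. In case (ii) the limit yields $JE'(u)=\lambda Ju$ with $\lambda=\lim\tfrac{(JE'(u_{n}),Ju_{n})}{\vert Ju_{n}\vert^{2}}\ge0$ (the denominators stay bounded away from $0$ since $\Vert u\Vert=R_{0}>0$ forces $u\neq0$ and $Ju\neq0$); the compression condition \eqref{h1} forbids $\lambda>0$, so $\lambda=0$ and $E'(u)=0$. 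Case (iii) is identical using \eqref{h2}, giving $JE'(u)=-\lambda u$ with $\lambda\ge0$ and hence $\lambda=0$. In all cases we obtain $u\in K_{R_{0}R_{1}}$ with $E'(u)=0$ and $E(u)=m$, as claimed.
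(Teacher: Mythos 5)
The paper does not prove Theorem \ref{t1} at all: it is imported verbatim as a ``slight particularization of the main results from \cite{Pre, Pre2}'', so there is no in-paper argument to compare yours against. Your reconstruction --- Ekeland's variational principle on the complete metric space $K_{R_{0}R_{1}}$, classification of the almost-minimizing sequence into the three (MPSS) alternatives via admissible descent directions, then passage to the limit and elimination of $\lambda>0$ by the compression conditions \eqref{h1}--\eqref{h2} --- is the same strategy used in those references, and its analytic core is sound: the identity $(JE'(u_{n}),\tilde w_{n})=-\vert\tilde w_{n}\vert^{2}$ is correct, the corner $\{\Vert u\Vert=R_{0},\ \vert u\vert=R_{1}\}$ is correctly excluded by \eqref{r11}, the coefficients $c_{n}=(JE'(u_{n}),Ju_{n})/\vert Ju_{n}\vert^{2}$ are bounded because \eqref{b1} bounds the numerator and $\vert Ju_{n}\vert\geq R_{0}^{2}/R_{1}$ bounds the denominator away from zero, and the limit argument correctly forces $\lambda=0$.

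There are, however, two genuine gaps. The decisive one is the one you flag yourself: on the inner sphere $\Vert u_{n}\Vert=R_{0}$ the corrected direction contains the term $+c_{n}Ju_{n}$ with $c_{n}\geq 0$, and to keep $u_{n}+t\tilde w_{n}$ in the cone you invoke $J(K)\subset K$. That is \emph{not} among the hypotheses of Theorem \ref{t1}: the standing assumptions are only \eqref{c1}, \eqref{b1}, \eqref{b2}, plus (MPSS), \eqref{h1}--\eqref{h2} and \eqref{r11}. It does hold in the paper's application (by Lemmas \ref{LH} and \ref{LH2}, $J$ maps nonnegative nondecreasing data into $K$, and all elements of $K$ are nonnegative and nondecreasing), but as a proof of the abstract statement your argument either needs $J(K)\subset K$ added as a hypothesis or needs a different construction of the admissible test element on the inner sphere; you cannot leave this as an acknowledged obstacle. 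The second, lesser gap is the ``routine but must be carried out'' second-order adjustment: since $\tilde w_{n}$ is exactly tangent to the constraint sphere (indeed $(Ju_{n},\tilde w_{n})=0$), the point $u_{n}+t\tilde w_{n}$ may violate $\Vert v\Vert\geq R_{0}$ by $o(t)$, and one must check that the rescaling $v\mapsto (R_{0}/\Vert v\Vert)v$ (legitimate since $K$ is a cone, and harmless for $\vert v\vert\leq R_{1}$ because the corner is excluded) perturbs the Ekeland inequality only by $o(t)$. Both points are fixable, but as written the proposal proves a theorem with a stronger hypothesis than the one stated.
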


\begin{theorem}
\label{t2}Assume that $E$ has the mountain pass geometry in $K_{R_{0}R_{1}}$
and that there is a $\rho >0$ with%
\begin{equation}
\left\vert E\left( u\right) -c\right\vert \geq \rho  \label{r12}
\end{equation}%
\emph{(}$c$ given in \emph{\eqref{e-c})} for all $u\in K_{R_{0}R_{1}}$ which
simultaneously satisfy $\left\vert u\right\vert =R_{1},\ \left\Vert
u\right\Vert =R_{0}.$ In addition assume that $E$ satisfies the (MPSS)
condition and the compression boundary condition in $K_{R_{0}R_{1}}.$ Then
there exists $u\in K_{R_{0}R_{1}}$ such that
\begin{equation*}
E^{\prime }\left( u\right) =0\ \ \ \text{and\ \ \ }E\left( u\right) =c.
\end{equation*}
\end{theorem}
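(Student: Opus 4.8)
The plan is to run the classical mountain-pass deformation argument, but on the constrained set $K_{R_{0}R_{1}}$ rather than on all of $X$, and then to contradict the definition \eqref{e-c} of the minimax level $c$. Throughout I use that $JE^{\prime }(u)$ is the $X$-gradient of $E$: from $J=L^{-1}$ and $(u,v)=\left\langle Lu,v\right\rangle $ one gets $(JE^{\prime }(u),v)=\left\langle E^{\prime }(u),v\right\rangle $ for all $v\in X$, so that along the steepest-descent flow $\dot u=-JE^{\prime }(u)$ one has $\tfrac{d}{dt}E(u)=-|JE^{\prime }(u)|^{2}\le 0$, $\tfrac{d}{dt}\Vert u\Vert ^{2}=-2(JE^{\prime }(u),Ju)$ and $\tfrac{d}{dt}|u|^{2}=-2(JE^{\prime }(u),u)$. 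These three identities explain the three alternatives of the (MPSS) condition: (i) is the interior Palais--Smale situation, while (ii) and (iii) are the tangential (projected-gradient) conditions on the inner face $\Vert u\Vert =R_{0}$ and the outer face $|u|=R_{1}$, the projections being taken along $Ju$ and along $u$ respectively, which are precisely the normals to these two constraints.

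I would argue by contradiction, assuming no $u\in K_{R_{0}R_{1}}$ satisfies $E^{\prime }(u)=0$ and $E(u)=c$. First I establish a quantitative gradient bound on a thin slab $S_{\eps }:=\{u\in K_{R_{0}R_{1}}:|E(u)-c|\le \eps \}$ for small $\eps $: on $S_{\eps }$ the relevant (possibly projected) gradient is bounded away from $0$. If this failed, a sequence in $S_{\eps }$ would fall into one of the cases (i)--(iii) and, by (MPSS), subconverge to a critical point at level $c$, a contradiction; the corner gap condition \eqref{r12} guarantees that, for $\eps <\rho $, the slab $S_{\eps }$ avoids the corner $\{\Vert u\Vert =R_{0},\ |u|=R_{1}\}$, so on $S_{\eps }$ at most one face is active and the two projections never have to be reconciled at once. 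Next I build a locally Lipschitz descent field $W$ on $S_{\eps }$ from a pseudo-gradient regularization of $-JE^{\prime }(u)$ in the interior and of the tangential parts $-[JE^{\prime }(u)-\tfrac{(JE^{\prime }(u),Ju)}{|Ju|^{2}}Ju]$ and $-[JE^{\prime }(u)-\tfrac{(JE^{\prime }(u),u)}{R_{1}^{2}}u]$ on the two faces. Cone-invariance of the associated flow is enforced by \eqref{c1}: since $u-JE^{\prime }(u)\in K$ and $K$ is convex, $-JE^{\prime }(u)$ points into the tangent cone of $K$ at $u$, so small Euler steps keep the flow in $K$; the sign conditions \eqref{b1}, \eqref{b2} control the field on the faces, while the strict compression conditions \eqref{h1}, \eqref{h2} ensure that the projected field there is both directed so that the flow does not exit $K_{R_{0}R_{1}}$ through that face and nonvanishing, so that no boundary critical point can stall the descent.

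Integrating $W$ produces a deformation $\eta \in C([0,1]\times K_{R_{0}R_{1}};K_{R_{0}R_{1}})$ with $\eta (0,\cdot )=\mathrm{id}$, fixing every point outside $S_{\eps }$ and satisfying $\eta (1,\{E\le c+\eps \})\subset \{E\le c-\eps \}$ for a smaller $\eps $, the standard conclusion of a deformation lemma. To close, choose by \eqref{e-c} a path $\gamma \in \Gamma $ with $\max_{t}E(\gamma (t))<c+\eps $; since $E(u_{0}),E(u_{1})<c$ lie below the slab, $\eta $ fixes the endpoints, so $\eta (1,\gamma (\cdot ))\in \Gamma $ while $\max_{t}E(\eta (1,\gamma (t)))\le c-\eps <c$, contradicting $c=\inf_{\Gamma }\max E$. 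Hence a critical point at level $c$ exists. I expect the main obstacle to be the vector-field construction: reconciling invariance of the cone $K$ (controlled in the ambient space through \eqref{c1}) with tangency to two constraints measured in two different norms ($\Vert \cdot \Vert $ on the inner face, $|\cdot |$ on the outer face), with enough regularity to integrate the flow. The corner gap condition \eqref{r12} is precisely what defuses the worst of this by keeping the two faces from being simultaneously active on the relevant slab; verifying that the field stays admissible on $K$ and on each face at once is the delicate bookkeeping, after which the minimax contradiction is routine.
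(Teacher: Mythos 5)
You should first be aware that the paper does not prove Theorem \ref{t2} at all: both Theorems \ref{t1} and \ref{t2} are imported verbatim as ``slight particularizations of the main results from \cite{Pre, Pre2}'', so there is no in-paper proof to compare against. Judged on its own terms, your outline does reconstruct the right strategy for this kind of result --- a deformation/minimax argument constrained to the conical shell --- and several of your preliminary observations are correct and worth keeping: the identities $\tfrac{d}{dt}E(u)=-|JE'(u)|^{2}$, $\tfrac{d}{dt}\Vert u\Vert^{2}=-2(JE'(u),Ju)$, $\tfrac{d}{dt}|u|^{2}=-2(JE'(u),u)$ correctly identify $Ju$ and $u$ as the $X$-gradients of the two constraint functionals, which explains the three alternatives in (MPSS); the role of \eqref{r12} in keeping the slab away from the corner $\{\Vert u\Vert=R_{0},\ |u|=R_{1}\}$ is exactly right; and the convexity argument that $(1-h)u+h\left(u-JE'(u)\right)\in K$ for $h\in[0,1]$, via \eqref{c1}, is the correct mechanism for cone invariance of the unprojected descent step.

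The genuine gap is precisely the step you defer as ``delicate bookkeeping'': the construction of a locally Lipschitz field that simultaneously decreases $E$ at a definite rate on the slab, keeps the flow in $K$, and keeps it in the shell on each face. Your convex-combination argument covers only the field $-JE'(u)$; once you replace it on the faces by the projected fields $-\bigl[JE'(u)-\tfrac{(JE'(u),Ju)}{|Ju|^{2}}Ju\bigr]$ and $-\bigl[JE'(u)-\tfrac{(JE'(u),u)}{R_{1}^{2}}u\bigr]$, invariance of $K$ is no longer automatic --- adding a nonnegative multiple of $Ju$ (or subtracting a multiple of $u$) need not preserve $K$ under hypothesis \eqref{c1} alone, and nothing in the stated assumptions gives $Ju\in K$. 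Likewise, \eqref{b1} and \eqref{b2} are uniform bounds on the normal components of the gradient, used in \cite{Pre, Pre2} to control the size of the boundary correction multipliers; they do not by themselves ``control the field on the faces'' in the sense of invariance, and \eqref{h1}, \eqref{h2} only guarantee nonvanishing of the corrected field, not that the deformed paths remain in $K_{R_{0}R_{1}}$ (which is what you need for $\eta(1,\gamma(\cdot))\in\Gamma$ in the final minimax contradiction). These points are the actual content of the cited theorems, and as written your argument asserts them rather than establishes them; to make the proof complete you would essentially have to reproduce the invariant-flow construction of \cite{Pre, Pre2} rather than the classical unconstrained deformation lemma.
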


\begin{remark}
\label{r} If the assumptions of both Theorems \ref{t1}, \ref{t2} are
satisfied, since $m<c,$ then $E$ has two distinct critical points in $%
K_{R_{0}R_{1}}.$
\end{remark}

\subsection{Krasnoselskii's compression-expansion theorem}

The problem (\ref{p}) can also be investigated by means of fixed point
techniques. In this paper, we are mainly concerned with the variational
approach based on critical point theory. However, it deserves to comment
about the applicability of fixed point methods and the surplus of
information given by the variational approach.

Thus we shall report on the applicability of Krasnoselskii's
compression-expansion theorem (see \cite{GL, KZ}), which guarantees the
existence of a fixed point of a compact operator in a conical shell of a
Banach space.

\begin{theorem}[Krasnoselskii]
\label{tk}Let $\left( X,\left\vert \cdot \right\vert \right) $ be a Banach
space and $K\subset X$ a cone. Let $R_{0},R_{1}$ be two numbers with $%
0<R_{0}<R_{1},$ $K_{R_{0}R_{1}}=\{u\in K:\ R_{0}\leq \left\vert u\right\vert
\leq R_{1}\},$ and let $N:K_{R_{0}R_{1}}\rightarrow K$ be a compact
operator. Let $<$ be the strict ordering induced in $X$ by the cone $K,$
i.e. $u<v$ if and only if $v-u\in K\setminus \left\{ 0\right\} .$ Assume
that one of the following conditions is satisfied:

\begin{description}
\item[(a)] compression: \emph{(i)} $N\left( u\right) \nless u$ for all $u\in
K$ with $\left\vert u\right\vert =R_{0},$ and \emph{(ii)} $N\left( u\right)
\ngtr u$ for all $u\in K$ with $\left\vert u\right\vert =R_{1};$

\item[(b)] expansion: \emph{(i)} $N\left( u\right) \ngtr u$ for all $u\in K$
with $\left\vert u\right\vert =R_{0},$ and \emph{(ii)} $N\left( u\right)
\nless u$ for all $u\in K$ with $\left\vert u\right\vert =R_{1}.$
\end{description}

Then $N$ has at least one fixed point in $K_{R_{0}R_{1}}.$
\end{theorem}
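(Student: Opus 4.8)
The plan is to prove both parts simultaneously by computing the fixed point index of $N$ on two concentric conical balls and invoking its additivity. Throughout I assume the standard fixed point index $i(\cdot,\cdot,K)$ for compact maps on the cone $K$, together with its normalization, additivity (excision), homotopy invariance and solution properties, as developed in \cite{GL, KZ}. A first reduction: if $N(u)=u$ for some $u$ on either sphere $\{|u|=R_0\}$ or $\{|u|=R_1\}$, then $u\in K_{R_0R_1}$ is already a fixed point and we are done; note that the boundary hypotheses, being negations of \emph{strict} cone inequalities, neither exclude nor produce such a point. Hence I may assume $N$ has no fixed point on either sphere. Since $N$ is defined only on the shell, I first extend it, by Dugundji's theorem, to a compact map $\tilde N$ on the conical ball $\{u\in K:|u|\le R_1\}$ with values in the closed convex hull of $N(K_{R_0R_1})\subset K$, so that $\tilde N$ agrees with $N$ on the shell and still maps into $K$.

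Next I would establish the two index computations that drive everything, writing $\Omega_\rho=\{u\in K:|u|<\rho\}$, relatively open in $K$. \emph{Index one:} if $N(u)\ngtr u$ for all $u$ with $|u|=\rho$, then $i(\tilde N,\Omega_\rho,K)=1$. I prove this with the admissible homotopy $H(t,u)=t\,\tilde N(u)$, $t\in[0,1]$: a boundary fixed point $u=t\,\tilde N(u)$ with $|u|=\rho>0$ forces $t\neq 0$ and $\tilde N(u)=t^{-1}u$, whence $\tilde N(u)-u=(t^{-1}-1)u\in K$; for $t<1$ this is a nonzero element of $K$, contradicting $N(u)\ngtr u$, while $t=1$ is the excluded boundary fixed point. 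Homotopy invariance and normalization then give index $1$.

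\emph{Index zero:} if $N(u)\nless u$ for all $u$ with $|u|=\rho$, then $i(\tilde N,\Omega_\rho,K)=0$. I fix $e\in K\setminus\{0\}$ and use the homotopy $H(t,u)=\tilde N(u)+t\,e$, $t\ge0$. A boundary fixed point $u=\tilde N(u)+t\,e$ with $|u|=\rho$ and $t>0$ gives $u-\tilde N(u)=t\,e\in K\setminus\{0\}$, i.e. $N(u)<u$, contradicting the hypothesis, while $t=0$ is the excluded fixed point. Moreover, since $\tilde N$ is compact, $|\tilde N|$ is bounded on $\overline{\Omega_\rho}$ by some $M$, so from $t|e|=|u-\tilde N(u)|\le\rho+M$ one sees that for $t>(\rho+M)/|e|$ the map $\tilde N+t\,e$ has no fixed point in $\overline{\Omega_\rho}$ at all; its index is therefore $0$, and homotopy invariance transfers this back to $\tilde N$.

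Finally I would combine these via additivity, $i(\tilde N,\Omega_{R_1},K)=i(\tilde N,\Omega_{R_0},K)+i(\tilde N,\Omega_{R_1}\setminus\overline{\Omega_{R_0}},K)$. In the compression case, hypothesis (i) at $R_0$ gives $i(\tilde N,\Omega_{R_0},K)=0$ and hypothesis (ii) at $R_1$ gives $i(\tilde N,\Omega_{R_1},K)=1$, so the index on the open shell equals $1\neq0$; in the expansion case the roles of the two spheres are interchanged and the index on the open shell equals $-1\neq0$. Either way the solution property yields a fixed point $u$ with $R_0<|u|<R_1$, which, since $\tilde N=N$ there, is a fixed point of $N$ in $K_{R_0R_1}$. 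I expect the only delicate points to be bookkeeping ones: correctly translating the cone-ordering negations $\nless$, $\ngtr$ into the sign data of the two homotopies, and justifying the index-zero computation (the choice of the pushing direction $e$ and the compactness bound that removes fixed points for large $t$). Granting the index theory of \cite{GL, KZ}, the remainder is routine.
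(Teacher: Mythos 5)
The paper offers no proof of this statement: it is quoted as a known classical result with a pointer to \cite{GL, KZ}, so there is no in-paper argument to compare against. Your proposal is the standard fixed-point-index proof of the compression--expansion theorem (essentially the argument in Guo--Lakshmikantham), and it is correct. All the essential points are in place: the preliminary dismissal of fixed points on the two spheres, which you rightly note is compatible with the hypotheses since $N(u)=u$ violates neither $N(u)\nless u$ nor $N(u)\ngtr u$, and which is needed both for admissibility of the homotopies and for the excision step; the Dugundji extension of $N$ to the conical ball with values in $\overline{\mathrm{co}}\,N(K_{R_{0}R_{1}})\subset K$, which is compact because the closed convex hull of a compact set in a Banach space is compact, so $\tilde N$ remains a compact cone-valued map; the computation $i=1$ under $N(u)\ngtr u$ via the radial homotopy $t\tilde N(u)$, where $0<t<1$ would force $\tilde N(u)-u=(t^{-1}-1)u\in K\setminus\{0\}$ and $t=1$ is an excluded boundary fixed point; the computation $i=0$ under $N(u)\nless u$ via the translation homotopy $\tilde N(u)+te$ together with the a priori bound $t|e|\le \rho+M$ that removes all fixed points for large $t$; and the additivity identity giving index $1$ (compression) or $-1$ (expansion) on the open shell. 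The only bookkeeping worth spelling out in a final write-up is that the relative boundary in $K$ of the open shell is contained in the union of the two spheres (so excision applies once boundary fixed points are excluded) and that the resulting fixed point lies in $\{R_{0}<|u|<R_{1}\}$, where $\tilde N=N$; both are immediate. Granting the index theory of \cite{GL, KZ}, the argument is complete.
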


\subsection{Variational formulation of the problem}

Next we are going to describe the variational structure of the problem (\ref%
{p}) (see \cite{TC, YCY}).

Let $X$ be the Hilbert space
\begin{equation*}
X:=\{u\in H^{2}(0,1):\ u(0)=u^{\prime }(0)=0\}
\end{equation*}%
with inner product and norm
\begin{equation*}
(u,v):=\int_{0}^{1}u^{\prime \prime }(t)v^{\prime \prime }(t)dt,
\end{equation*}%
\begin{equation}
|u|:=\left( \int_{0}^{1}(u^{\prime \prime }(t))^{2}dt\right) ^{\frac{1}{2}}.
\label{en}
\end{equation}

We associate to the problem (\ref{p}), the functional $E:X\rightarrow
\mathbb{R}$ defined by
\begin{equation*}
E(u):=\frac{1}{2}\left\vert u\right\vert ^{2}-\,\int_{0}^{1}F\left(t,u\left(
t\right) \right) dt,
\end{equation*}%
where
\begin{equation*}
F(t,u)=\int_{0}^{u}f(t,s)ds\,.
\end{equation*}

The functional $E:X\rightarrow \mathbb{R}$ is $C^{1}$ and for any $u,v\in X,$
\begin{equation*}
<E^{\prime }(u),v>=\int_{0}^{1}\left( u^{\prime \prime }(t)\,v^{\prime
\prime }(t)-f(t,u(t))\,v(t)\right) \,dt.
\end{equation*}

Also, $u\in X$ is a critical point of $E$ if and only if $u$ is a classical
solution of the problem (\ref{p}) (see \cite{TC}).

In this specific case, $H=L^{2}\left( 0,1\right) $ with the usual norm
denoted by $\left\Vert \cdot \right\Vert ,$ and $L:X\rightarrow X^{\prime }$
is given by $Lu=u^{\left( 4\right) }\ $(in the distributional sense). The
inverse of $L$ is the operator $J\colon X^{\prime }\rightarrow X$ which at
each $v\in X^{\prime }$ attaches the unique $u\in X$ with $u^{\left(
4\right) }=v$ in the sense of distributions.

In particular, if $v\in L^{2}\left( 0,1\right) ,$ one has the representation
\begin{equation}
\left( Jv\right) \left( t\right) =\,\int_{0}^{1}G(t,s)\,v\left( s\right)
\,ds\,,  \notag
\end{equation}%
where $G(t,s)$ is the Green's function related to the fourth order problem
\begin{equation*}
\left\{
\begin{array}{ll}
u^{(4)}(t)=v(t), & 0<t<1, \\
u(0)=u^{\prime }(0)=u^{\prime \prime }(1)=u^{\prime \prime \prime }(1)=0. &
\end{array}%
\right.
\end{equation*}%
By means of the Mathematica package developed in \cite{cacima}, we have that
such function is given by the following expression:
\begin{equation}
G(t,s)=\left\{
\begin{array}{lc}
\dfrac{s^{2}}{6}\,(3\,t-s)\,, & 0\leq s\leq t\leq 1, \\
&  \\
\dfrac{t^{2}}{6}\,(3\,s-t)\,, & 0\leq t<s\leq 1\,.%
\end{array}%
\right.  \label{e-green}
\end{equation}%
Then the problem (\ref{p}) is equivalent to the integral equation
\begin{equation}
u(t)=\int_{0}^{1}G(t,s)\,f(s,u(s))\,ds\,,\ \ u\in C\left[ 0,1\right] .
\label{fpp}
\end{equation}%
We note that $E^{\prime }\left( u\right) =Lu-f\left( \cdot ,u\right) $ and $%
JE^{\prime }\left( u\right) =u-N\left( u\right) ,$ where
\begin{equation*}
N\left( u\right) =Jf\left( \cdot ,u\right) =\int_{0}^{1}G(\cdot
,s)\,f(s,u(s))\,ds.
\end{equation*}%
Obviously, (\ref{fpp}) represents a fixed point equation associated to $N.$
Note also that, since the embedding $X\subset C([0,1])$ is compact and $f$
is a continuous function, $N$ is a compact operator from $X$ to $X.$

\section{Main results}

\subsection{Localization in a shell defined by the energetic norm}

First we shall deal with the localization of positive solutions $u$ of the
problem (\ref{p}) in a shell defined by a single norm, more exactly

\begin{equation*}
R_{0}\leq \left\vert u\right\vert \leq R_{1},
\end{equation*}%
where $\left\vert \cdot \right\vert $ is the energetic norm given by (\ref%
{en}). For this, the following unilateral Harnack inequality is crucial.

\begin{lemma}
\label{LH}If $u\in C^{4}\left[ 0,1\right] $ satisfies $u(0)=u^{\prime
}(0)=u^{\prime \prime }(1)=u^{\prime \prime \prime }(1)=0$ and $u^{\left(
4\right) }$ is nonnegative and nondecreasing in $\left[ 0,1\right] ,$ then $%
u $ is convex and%
\begin{equation}
u\left( t\right) \geq M_{0}\left( t\right) \left\vert u\right\vert \ \ \
\text{for all }t\in \left[ 0,1\right] ,  \label{hi}
\end{equation}%
where \ $M_{0}\left( t\right) =\frac{\sqrt{2}\left( 1-t\right) t^{3}}{6}.$
\end{lemma}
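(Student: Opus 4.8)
The plan is to peel off the four derivatives one at a time, reducing the statement to two elementary one–variable estimates. First I record the monotonicity and sign information forced by the hypotheses. From $u(0)=u'(0)=0$ I get the representation
\[
u(t)=\int_0^t (t-s)\,u''(s)\,ds .
\]
Put $q:=-u'''$. Integrating $u^{(4)}\ge 0$ from $t$ to $1$ and using $u'''(1)=0$ gives $q\ge 0$; since $q'=-u^{(4)}\le 0$, $q$ is nonincreasing; and since $u^{(4)}$ is \emph{nondecreasing}, $q'$ is nonincreasing, i.e. $q$ is \emph{concave}. Also $q(1)=0$, and $u''(s)=\int_s^1 q\ge 0$ (using $u''(1)=0$), so $u''\ge 0$ and $u$ is convex, which is the first assertion.

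For the lower estimate I would use only that $q$ is nonnegative and nonincreasing. For $s\le t$,
\[
u''(s)=\int_s^1 q \;\ge\; \int_s^t q \;\ge\; (t-s)\,q(t),
\]
so that $u(t)\ge q(t)\int_0^t (t-s)^2\,ds=\tfrac{t^3}{3}\,q(t)=\tfrac{t^3}{3}\bigl(-u'''(t)\bigr)$.

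It remains to control the energetic norm $|u|=\bigl(\int_0^1 (u'')^2\bigr)^{1/2}$ by $q(t)$. Here concavity enters: from $q(t)\ge (1-t)\,q(0)$ (concavity with $q(1)=0$) I get $q(\sigma)\le q(0)\le q(t)/(1-t)$ for \emph{all} $\sigma$, whence $u''(s)=\int_s^1 q\le \tfrac{q(t)}{1-t}(1-s)$ and therefore
\[
|u|^2=\int_0^1 (u''(s))^2\,ds \;\le\; \frac{q(t)^2}{(1-t)^2}\int_0^1 (1-s)^2\,ds=\frac{q(t)^2}{3(1-t)^2},
\]
i.e. $q(t)\ge \sqrt3\,(1-t)\,|u|$. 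Combining the two displays gives $u(t)\ge \tfrac{t^3}{3}\,q(t)\ge \tfrac{(1-t)t^3}{\sqrt3}\,|u|\ge M_0(t)\,|u|$, since $\tfrac{1}{\sqrt3}\ge \tfrac{\sqrt2}{6}$; so this route in fact yields the inequality with a slightly larger constant than stated, which of course implies \eqref{hi}.

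\textbf{Main obstacle.} The decisive step is the norm bound of the last paragraph, and it is the only place where the hypothesis that $u^{(4)}$ is \emph{nondecreasing} (equivalently, concavity of $q=-u'''$) is used. It is genuinely indispensable: with only $u^{(4)}\ge 0$ one already has $u''$ convex and nonincreasing, but the curvature of $u''$ could be concentrated near $s=0$ (a near–spike in $u^{(4)}$), making $|u|$ large while $q(t)=-u'''(t)$ stays bounded, so that $u(t)/|u|$ could be made arbitrarily small and \eqref{hi} would fail. Concavity of $q$ is exactly what prevents this by forcing $q(\sigma)\le q(t)/(1-t)$ on all of $[0,1]$; pinning down this estimate (and the resulting factor $1-t$) is the heart of the argument.
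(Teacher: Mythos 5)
Your proof is correct: the sign/monotonicity analysis, the concavity of $q=-u'''$ (the one place the hypothesis that $u^{(4)}$ is nondecreasing enters), the lower bound $u(t)\ge \tfrac{t^{3}}{3}\,q(t)$ from the Taylor representation together with $u''(s)\ge (t-s)q(t)$, and the upper bound $|u|\le \tfrac{q(t)}{\sqrt{3}\,(1-t)}$ from the chord inequality $q(0)\le q(t)/(1-t)$ all check out. The overall strategy coincides with the paper's --- bound $u(t)$ from below and $|u|$ from above, both in terms of the third derivative, exploiting the convexity of $u'''$ --- but the two key estimates are obtained differently. The paper pivots everything through the single value $u'''(0)$: it proves $u(t)\ge -\tfrac{t^{3}}{6}u'''(t)\ge -\tfrac{(1-t)t^{3}}{6}u'''(0)$ by an integration by parts of $\int_{0}^{t}s\,u''(s)\,ds$, and then bounds $|u|^{2}\le -u'''(0)\,u'(1)\le \tfrac{1}{2}u'''(0)^{2}$, which produces the constant $\tfrac{\sqrt{2}}{6}$. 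You instead work pointwise with $q(t)$, squeezing $u''$ between $(t-s)q(t)$ on $[0,t]$ and $\tfrac{1-s}{1-t}q(t)$ on $[0,1]$; this avoids the integration by parts and yields the strictly larger factor $\tfrac{(1-t)t^{3}}{\sqrt{3}}$ in place of $M_{0}(t)=\tfrac{\sqrt{2}(1-t)t^{3}}{6}$, hence implies \eqref{hi} with room to spare. Your closing remark that the estimate genuinely requires the monotonicity of $u^{(4)}$ is in the spirit of the paper, which explicitly raises this as an open question and sidesteps it in Lemma \ref{LH2} by switching to the sup norm and a different weight $M(t)$.
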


\begin{proof}
From $u^{\left( 4\right) }\geq 0$ it follows that $u^{\prime \prime }$ is
convex. This together with $u^{\prime \prime }\left( 1\right) =\left(
u^{\prime \prime }\right) ^{\prime }\left( 1\right) =0$ gives that $%
u^{\prime \prime }$ is nonnegative and nonincreasing. Next, from $u^{\prime
\prime }\geq 0$ one has that $u$ is convex, and since $u\left( 0\right)
=u^{\prime }\left( 0\right) =0,$ $u$ must be nondecreasing and nonnegative.

On the other hand, since $u^{\left( 4\right) }\geq 0$ we have that $%
u^{\prime \prime \prime }$ is nondecreasing and since $u^{\prime \prime
\prime }\left( 1\right) =0,$ $u^{\prime \prime \prime }\leq 0.$ Then $%
u^{\prime }$ is concave; it is also nondecreasing due to $u^{\prime \prime
}\geq 0,$ and since $u^{\prime }\left( 0\right) =0,$ we have $u^{\prime
}\geq 0.$ Now from $u^{\prime \prime }\geq 0,$ $u^{\prime }\geq 0$ and $%
u\left( 0\right) =0,$ we see that $u$ is nonnegative, nondecreasing and
convex.

Finally note that from $u^{\left( 4\right) }$ nondecreasing, we have that $%
u^{\prime \prime \prime }$ is convex, and since $u^{\prime \prime \prime
}\left( 1\right) =0,$ the graph of $u^{\prime \prime \prime }$ is under the
line connecting the points $\left( 0,u^{\prime \prime \prime }\left(
0\right) \right) $ and $\left( 1,0\right) ,$ i.e.%
\begin{equation}
u^{\prime \prime \prime }\left( t\right) \leq \left( 1-t\right) u^{\prime
\prime \prime }\left( 0\right) ,\ \ \ t\in \left[ 0,1\right] .  \label{ci}
\end{equation}

Due to the fact that the function $u^{\prime \prime }$ is nonincreasing and
the function $u^{\prime \prime \prime}$ is nondecreasing we have:
\begin{eqnarray*}
u\left( t\right) &=&\int_{0}^{t}\int_{0}^{s}u^{\prime \prime }\left( \tau
\right) d\tau ds\geq \int_{0}^{t}\int_{0}^{s}u^{\prime \prime }\left(
s\right) d\tau ds=\int_{0}^{t}su^{\prime \prime }\left( s\right) ds \\
&=&\frac{t^{2}}{2}u^{\prime \prime }\left( t\right) -\int_{0}^{t}\frac{s^{2}%
}{2}u^{\prime \prime \prime }\left( s\right) ds\geq -\int_{0}^{t}\frac{s^{2}%
}{2}u^{\prime \prime \prime }\left( s\right) ds \\
&\geq &-\int_{0}^{t}\frac{s^{2}}{2}u^{\prime \prime \prime }\left( t\right)
ds=-\frac{t^{3}}{6}u^{\prime \prime \prime }\left( t\right) .
\end{eqnarray*}%
This inequality combined with (\ref{ci}) gives%
\begin{equation}
u\left( t\right) \geq -\frac{\left( 1-t\right) t^{3}}{6}u^{\prime \prime
\prime }\left( 0\right) .  \label{e0}
\end{equation}%
Next we deal with the energetic norm wishing to connect it to $u^{\prime
\prime \prime }\left( 0\right) .$ One has%
\begin{eqnarray}
\left\vert u\right\vert ^{2} &=&\int_{0}^{1}u^{\prime \prime }\left(
t\right) ^{2}dt=\left. u^{\prime \prime }u^{\prime }\right\vert
_{0}^{1}-\int_{0}^{1}u^{\prime \prime \prime }\left( t\right) u^{\prime
}\left( t\right) dt  \label{e1} \\
&=&-\int_{0}^{1}u^{\prime \prime \prime }\left( t\right) u^{\prime }\left(
t\right) dt\leq -u^{\prime \prime \prime }\left( 0\right) u^{\prime }\left(
1\right) .  \notag
\end{eqnarray}%
Also%
\begin{eqnarray}
u^{\prime }\left( 1\right) &=&\int_{0}^{1}u^{\prime \prime }\left( t\right)
dt=-\int_{0}^{1}\int_{t}^{1}u^{\prime \prime \prime }\left( s\right)
dsdt\leq -\int_{0}^{1}\int_{t}^{1}u^{\prime \prime \prime }\left( t\right)
dsdt  \label{e2} \\
&=&-\int_{0}^{1}\left( 1-t\right) u^{\prime \prime \prime }\left( t\right)
dt\leq -\int_{0}^{1}\left( 1-t\right) u^{\prime \prime \prime }\left(
0\right) dt=-\frac{1}{2}u^{\prime \prime \prime }\left( 0\right) .  \notag
\end{eqnarray}%
From (\ref{e1}) and (\ref{e2}) we deduce $\ \left\vert u\right\vert ^{2}\leq
\frac{1}{2}u^{\prime \prime \prime }\left( 0\right) ^{2},$ or%
\begin{equation*}
-u^{\prime \prime \prime }\left( 0\right) \geq \sqrt{2}\left\vert
u\right\vert .
\end{equation*}%
This inequality and (\ref{e0}) prove (\ref{hi}).
\end{proof}

Consider the cone%
\begin{equation*}
K:=\left\{ u\in X:u\text{ convex and}\ u\left( t\right) \geq M_{0}\left(
t\right) \left\vert u\right\vert \text{ \ on }\left[ 0,1\right] \right\} .
\end{equation*}

We note that, since any convex function with $u\left( 0\right) =u^{\prime
}\left( 0\right) =0$ is nondecreasing, all the elements of $K$ are
nondecreasing functions.

Also $K\neq \left\{ 0\right\} .$ Indeed, if we consider the eigenvalue
problem
\begin{equation*}
\left\{
\begin{array}{l}
\phi ^{(4)}(t)=\lambda \,\phi (t)\,,\quad 0<t<1\,, \\
\phi (0)=\phi ^{\prime }(0)=\phi ^{\prime \prime }(1)=\phi ^{\prime \prime
\prime }(1)=0\,,%
\end{array}%
\right.
\end{equation*}%
then its first eigenvalue $\lambda _{1}=\beta ^{4},$ where $\beta =\dfrac{%
\pi }{2}+0.3042$ is the smallest positive solution of the equation
\begin{equation*}
\cos \lambda \cosh \lambda +1=0\,,
\end{equation*}%
while the function
\begin{equation}
\phi _{1}(t)=\sin \beta \,t-\sinh \beta \,t+\dfrac{\sinh \beta +\sin \beta }{%
\cosh \beta +\cos \beta }\,(\cosh \beta \,t-\cos \beta \,t)\,  \label{ef}
\end{equation}%
is a positive eigenfunction (see \cite{S}) corresponding to $\lambda _{1}.$
In addition, one can check that $\phi _{1}^{\prime \prime }\geq 0,$ that is $%
\phi _{1}$ is convex. Since $\phi _{1}\left( 0\right) =\phi _{1}^{\prime
}\left( 0\right) =0,$ we must have $\phi _{1}\geq 0$ and $\phi _{1}^{\prime
}\geq 0$ on $\left[ 0,1\right] $. As consequence, $\phi _{1}^{(5)}=\lambda
\,\phi _{1}^{\prime }\geq 0$ on $[0,1].$ Then, according to Lemma \ref{LH}, $%
\phi _{1}\in K.$

Denote
\begin{equation*}
M_{1}\left( t\right) :=\frac{2}{3}t^{\frac{3}{2}},\ \ \ \ t\in \left[ 0,1%
\right] .\ \ \
\end{equation*}%
Our assumptions on $f$ are as follows:

\begin{description}
\item[(h1)] $f$ is nondecreasing on $\left[ 0,1\right] \times \mathbb{R}_{+}$
in each of its variables;

\item[(h2)] there exist $R_{0},R_{1}$ with $0<R_{0}<R_{1}$ such that

\begin{description}
\item[(a)] $\int_{0}^{1}M_{0}\left( t\right) f\left( t,M_{0}\left( t\right)
R_{0}\right) dt\geq R_{0},$

\item[(b)] $\int_{0}^{1}M_{1}\left( t\right) f\left( t,M_{1}\left( t\right)
R_{1}\right) dt\leq R_{1}.$
\end{description}

\item[(h3)] there exist $u_{0},u_{1}\in K_{R_{0}R_{1}}=\{u\in K:R_{0}\leq
\left\vert u\right\vert \leq R_{1}\}$ and $r>0$ such that $\left\vert
u_{0}\right\vert <r<\left\vert u_{1}\right\vert $ and%
\begin{equation*}
\max \left\{ E\left( u_{0}\right) ,\ E\left( u_{1}\right) \right\} <\inf
\left\{ E\left( u\right) :\ u\in K,\ \left\vert u\right\vert =r\right\} .
\end{equation*}
\end{description}

\begin{theorem}
\label{t3}Assume that \emph{(h1), (h2)} are satisfied. Let $\Gamma ,$ $m$
and $c$ be defined as in \emph{\eqref{e-gamma}, \eqref{e-c}} and \emph{%
\eqref{e-m}} respectively. Then the fourth-order problem \emph{(\ref{p})}
has at least one positive solution $u_{m}$ in $K_{R_{0}R_{1}}$ such that
\begin{equation*}
E\left( u_{m}\right) =m.
\end{equation*}%
If in addition \emph{(h3)} holds, then a second positive solution $u_{c}$
exists in $K_{R_{0}R_{1}}$ with
\begin{equation*}
E\left( u_{c}\right) =c.
\end{equation*}
\end{theorem}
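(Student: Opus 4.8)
The plan is to obtain $u_m$ from Theorem~\ref{t1} and $u_c$ from Theorem~\ref{t2}, applying the abstract scheme recalled above with the single Hilbert space $X$ endowed with its energetic inner product also serving as $H$, so that $\|\cdot\|=|\cdot|$. Then $L$ and $J$ reduce to the identity on $X$, $JE'(u)=u-N(u)$ with $N$ the compact operator of \eqref{fpp}, and the shell $K_{R_0R_1}$ becomes exactly $\{u\in K:\ R_0\le|u|\le R_1\}$. A bonus of this reduction is that no $u$ can satisfy $\|u\|=R_0$ and $|u|=R_1$ at once (because $R_0<R_1$), so the separation hypotheses \eqref{r11} and \eqref{r12} are vacuous; the distinctness $m<c$ then comes from Remark~\ref{r}.

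I would first dispatch the background conditions. For \eqref{c1} note that if $u\in K$ then $u$ is nonnegative and nondecreasing, hence $s\mapsto f(s,u(s))$ is nonnegative and nondecreasing by (h1); as $N(u)$ solves $w^{(4)}=f(\cdot,u)\in C[0,1]$ under the boundary conditions of \eqref{p}, Lemma~\ref{LH} applied to $w=N(u)$ gives $N(u)$ convex with $N(u)(t)\ge M_0(t)|N(u)|$, i.e. $N(u)\in K$. The functional $E$ is bounded below on $K_{R_0R_1}$ since $|u|\le R_1$ keeps $u$ bounded in $C[0,1]$ and $F$ is continuous, and \eqref{b1}--\eqref{b2} hold for large $\nu_0$ because $(N(u),u)=\int_0^1N(u)''u''\,dt\ge0$ and $|N(u)|\le R_1$ on the outer face (see below). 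Finally (MPSS) reduces to compactness of $N$: along each sequence in (i)--(iii) one has $u_k-N(u_k)\to0$ (or its stated projection), whence $N(u_k)$, and so $u_k$, subconverges.

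The heart of the argument is the compression boundary condition. Two integrations by parts using the boundary conditions give the identity
\[
|N(u)|^2=\int_0^1 (N(u)''(s))^2\,ds=\int_0^1 f(s,u(s))\,N(u)(s)\,ds .
\]
On the inner face $|u|=R_0$, inequality \eqref{hi} yields $u(s)\ge M_0(s)R_0$ and, since $N(u)\in K$, also $N(u)(s)\ge M_0(s)|N(u)|$; monotonicity of $f$ and (h2)(a) then force $|N(u)|^2\ge|N(u)|\int_0^1M_0(s)f(s,M_0(s)R_0)\,ds\ge|N(u)|R_0$, i.e. $|N(u)|\ge R_0$. On the outer face $|u|=R_1$ I use the upper bound $u(s)\le M_1(s)|u|$ (from $u(s)=\int_0^s(s-\tau)u''(\tau)\,d\tau$ and Cauchy--Schwarz), applied to both $u$ and $N(u)$, together with (h2)(b), to get $|N(u)|^2\le|N(u)|\int_0^1M_1(s)f(s,M_1(s)R_1)\,ds\le|N(u)|R_1$, i.e. $|N(u)|\le R_1$. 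These two norm estimates yield \eqref{h1} and \eqref{h2}: an identity $N(u)=(1-\lambda)u$ on $|u|=R_0$ with $\lambda>0$ is impossible, since it forces either $|N(u)|<R_0$ or, when $1-\lambda\le0$, $N(u)\in K\cap(-K)=\{0\}$; and $N(u)=(1+\lambda)u$ on $|u|=R_1$ would give $|N(u)|=(1+\lambda)R_1>R_1$.

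With every hypothesis in place, Theorem~\ref{t1} produces $u_m\in K_{R_0R_1}$ with $E'(u_m)=0$ and $E(u_m)=m$; being in $K$ with $|u_m|\ge R_0>0$, it satisfies $u_m(t)\ge M_0(t)|u_m|$, hence is positive on $(0,1]$ and, by the variational formulation, a classical solution of \eqref{p}. If moreover (h3) holds it is precisely the mountain pass geometry, so Theorem~\ref{t2} yields a second positive solution $u_c$ with $E(u_c)=c$, distinct from $u_m$ because $m<c$. I expect the compression step to be the true obstacle: it closes only if the weight $M_1$ bounding $u$ from above also bounds $N(u)$ from above and is matched by (h2)(b), so pinning down the constants in $M_0$, $M_1$ and in (h2) is delicate; a secondary point requiring care is the passage from the abstract two-norm shell to the present single-norm localization.
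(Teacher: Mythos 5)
Your proof is correct and shares the paper's skeleton --- verify \eqref{c1} via Lemma \ref{LH} and (h1), boundedness from below via the bound $u(t)\le M_1(t)\left\vert u\right\vert$ of \eqref{r14}, (MPSS) via compactness of $N$, and then invoke Theorems \ref{t1} and \ref{t2} --- but your treatment of the compression boundary condition takes a genuinely different route. The paper keeps the equations in terms of $u$ itself: on the inner sphere it reads $JE^{\prime}(u)-\lambda Ju=0$ as $u^{(4)}=f(t,u)+\lambda u$ and tests against $u$ to get $R_0^2=\int_0^1\left[ f(t,u)+\lambda u\right] u\,dt>R_0\int_0^1M_0(t)f\left( t,M_0(t)R_0\right) dt\ge R_0^2$, and on the outer sphere it reads $JE^{\prime}(u)+\lambda u=0$ as $(1+\lambda)u^{(4)}=f(t,u)$ and gets $R_1^2<R_1^2$ from \eqref{r14} and (h2)(b). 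You instead derive the norm estimates $\left\vert N(u)\right\vert\ge R_0$ and $\left\vert N(u)\right\vert\le R_1$ on the two faces from the identity $\left\vert N(u)\right\vert^2=\int_0^1 f(s,u(s))N(u)(s)\,ds$ (using that $N(u)\in K$ obeys the same Harnack and $M_1$ bounds as $u$), and then refute the eigen-relations $N(u)=(1-\lambda)u$ and $N(u)=(1+\lambda)u$; this costs you an extra case split on the sign of $1-\lambda$ (correctly closed via $K\cap(-K)=\{0\}$) and tacitly needs $N(u)\neq 0$ on the inner face, which does follow from (h2)(a). Your explicit reduction to $H=X$, $J=I$ --- making \eqref{r11} and \eqref{r12} vacuous and turning the two-norm shell into the single-norm annulus of (h3) --- is in fact the internally consistent reading of this section, whereas the paper's displayed equation $u^{(4)}-f(t,u)-\lambda u=0$ tacitly treats $Ju$ as the Green's operator; since both versions of the inner condition are contradicted by (h2)(a), nothing is lost either way, and your version has the merit of making the role of the abstract two-norm framework transparent.
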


\begin{proof}
First let us note that the (MPSS) condition holds in $K_{R_{0}R_{1}}$ due to
the compactness of the operator $N=I-JE^{\prime }.$ Also the boundedness of $%
\left( JE^{\prime }\left( u\right) ,Ju\right) $ and $\left( JE^{\prime
}\left( u\right) ,u\right) $ on the boundaries of $K_{R_{0}R_{1}},$ i.e. (%
\ref{b1}) and (\ref{b2}) is guaranteed since $JE^{\prime }$ maps bounded
sets into bounded sets.

To check (\ref{c1}), let $u$ be any element of $K.$ Hence $u$ is nonnegative
and nondecreasing on $\left[ 0,1\right] .$ Then, from (h1) we also have that
$f\left(t,u\left(t\right)\right)$ is nonnegative and nondecreasing in $\left[
0,1\right] .$ Now, Lemma \ref{LH} implies that $Jf\left( \cdot ,u\left(
\cdot \right) \right) \in K.$ But $Jf\left( \cdot ,u\left( \cdot \right)
\right) =\left( I-JE^{\prime }\right) \left( u\right) .$ Thus (\ref{c1})
holds.

Next, let us note that for any $u\in K_{R_{0}R_{1}},$ we have
\begin{eqnarray}
u\left( t\right) &=&\int_{0}^{t}\int_{0}^{s}u^{\prime \prime }\left( \tau
\right) d\tau ds\leq \int_{0}^{t}\sqrt{s}\left( \int_{0}^{s}u^{\prime \prime
}\left( \tau \right) ^{2}d\tau \right) ^{1/2}ds  \label{r14} \\
&\leq &\left\vert u\right\vert \int_{0}^{t}\sqrt{s}ds=\frac{2}{3}t^{\frac{3}{%
2}}\left\vert u\right\vert =M_{1}\left( t\right) \left\vert u\right\vert .
\notag
\end{eqnarray}%
Then%
\begin{equation*}
E\left( u\right) =\frac{1}{2}\left\vert u\right\vert
^{2}-\int_{0}^{1}F\left( t,u\left( t\right) \right) dt\geq \frac{1}{2}%
R_{0}^{2}-F\left( 1,\frac{2}{3}R_{1}\right) .
\end{equation*}%
Hence $E$ is bounded from below on $K_{R_{0}R_{1}}.$

Furthermore, we check the boundary conditions (\ref{h1}). Assume that $%
JE^{\prime }\left( u\right) -\lambda Ju=0$ for some $u\in K$ with $%
\left\vert u\right\vert =R_{0}$ and $\lambda >0.$ Then $\ u$ solves the
problem%
\begin{equation*}
\left\{
\begin{array}{ll}
u^{(4)}(t)-\,f(t,u(t))-\lambda u\left( t\right) =0, & 0<t<1, \\
u(0)=u^{\prime }(0)=u^{\prime \prime }(1)=u^{\prime \prime \prime }(1)=0. &
\end{array}%
\right\}
\end{equation*}

and
\begin{eqnarray*}
R_{0}^{2} &=&\left\vert u\right\vert ^{2}=\int_{0}^{1}\left[ f\left(
t,u\left( t\right) \right) +\lambda u\left( t\right) \right] u\left(
t\right) dt \\
&\geq &\int_{0}^{1}\left[ f\left( t,M_{0}\left( t\right) R_{0}\right)
+\lambda M_{0}\left( t\right) R_{0}\right] M_{0}\left( t\right) R_{0}dt \\
&>&R_{0}\,\int_{0}^{1}f\left( t,M_{0}\left( t\right) R_{0}\right)
M_{0}\left( t\right)\,dt,
\end{eqnarray*}%
which contradicts the assumption (h2) (a). Hence $JE^{\prime }\left(
u\right) -\lambda Ju\neq 0$ for all $u\in K$ with $\left\vert u\right\vert
=R_{0}$ and $\lambda >0.$

Assume now that $JE^{\prime }\left( u\right) +\lambda u=0$ for some $u\in K$
with $\left\vert u\right\vert =R_{1}$ and $\lambda >0.$ Then $u$ solves the
problem
\begin{equation*}
\left\{
\begin{array}{ll}
\left( 1+\lambda \right) u^{(4)}(t)-\,f(t,u(t))=0, & 0<t<1, \\
u(0)=u^{\prime }(0)=u^{\prime \prime }(1)=u^{\prime \prime \prime }(1)=0. &
\end{array}%
\right.
\end{equation*}%
Then%
\begin{equation*}
R_{1}^{2}=\left\vert u\right\vert ^{2}=\frac{1}{1+\lambda }%
\int_{0}^{1}f\left( t,u\left( t\right) \right) u\left( t\right) dt.
\end{equation*}%
Using (\ref{r14}) we deduce%
\begin{equation*}
R_{1}^{2}<\int_{0}^{1}M_{1}\left( t\right) R_{1}f\left( t,M_{1}\left(
t\right) R_{1}\right) dt,
\end{equation*}%
which contradicts (h2) (b). Hence $JE^{\prime }\left( u\right) +\lambda
u\neq 0$ for all $u\in K$ with $\left\vert u\right\vert =R_{1}$ and $\lambda
>0.$

The conclusions follow from Theorem \ref{t1} and Theorem \ref{t2}.
\end{proof}

For the autonomous case $f\left( t,u\right) =f\left( u\right) ,$ where $f$
is nonnegative and nondecreasing on $\mathbb{R}_{+},$ we may replace the
conditions of (h2) by a couple of simpler inequalities.

\begin{example}
We give an example of a function $f(u)$ which satisfies the conditions $%
(h_{1})$ and $(h_{2})$ of Theorem 3.2. Note that $\ $%
\begin{equation*}
0\leq \frac{\sqrt{2}}{6}(1-t)t^{3}<0.03\ \ \ \ \text{if\ \ \ }0\leq t\leq 1,
\end{equation*}%
\begin{equation*}
\int_{0}^{1}\left( \frac{\sqrt{2}}{6}(1-t)t^{3}\right) ^{2}dt=\frac{1}{4536},
\end{equation*}%
and $4600\times \frac{3}{100}=138.$ Define
\begin{equation*}
f\left( u\right) =\left\{
\begin{array}{ll}
0, & u\leq 0, \\
4600\,u, & 0\leq u\leq 0.03, \\
138, & u\geq 0.03.%
\end{array}%
\right.
\end{equation*}%
Taking $R_{0}=1$ and $R_{1}=37,$ by
\begin{equation*}
\int_{0}^{1}138\,\frac{2}{3}t^{\frac{3}{2}}dt=\frac{184}{5}=36.8,
\end{equation*}%
we obtain that the conditions $(h_{1})$ and $(h_{2})$ are satisfied.
\end{example}

Further, we have:

\begin{theorem}
\label{t4}Assume that $f:\mathbb{R}_{+}\rightarrow \mathbb{R}_{+}$ is
continuous nondecreasing and that for some numbers $a\in \left( 0,1\right) ,$
$R_{0}$ and $R_{1}$ with $0<R_{0}<R_{1},$ one has
\begin{equation}
\frac{f\left( M_{0}\left( a\right) R_{0}\right) }{M_{0}\left( a\right) R_{0}}%
\geq \frac{1}{\left( 1-a\right) M_{0}\left( a\right) ^{2}},\ \ \ \frac{%
f\left( \frac{2}{3}R_{1}\right) }{R_{1}}\leq \frac{15}{4}.  \label{f2}
\end{equation}%
Then \emph{(\ref{p})} has at least one positive solution $u_{m}$ in $%
K_{R_{0}R_{1}}$ with $E\left( u_{m}\right) =m.$ If in addition \emph{(h3)}
holds, then a second positive solution $u_{c}$ exists in $K_{R_{0}R_{1}}$
with $E\left( u_{c}\right) =c.$
\end{theorem}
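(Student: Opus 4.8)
The plan is to run the argument of Theorem \ref{t3} almost verbatim, verifying the hypotheses of the abstract Theorems \ref{t1} and \ref{t2} for the functional $E$ and the cone $K$, and to conclude in the same way. All of the ``soft'' hypotheses are inherited unchanged: since $f$ is continuous, nonnegative and nondecreasing, the operator $N=I-JE'$ is compact (so (MPSS) holds), $JE'$ maps bounded sets to bounded sets (so \eqref{b1}--\eqref{b2} hold on the two boundaries), the invariance \eqref{c1} follows from Lemma \ref{LH} exactly as before, and the bound $u(t)\le M_1(t)|u|$ from \eqref{r14} shows that $E$ is bounded below on $K_{R_0R_1}$. The only genuinely new point is to deduce the two compression boundary conditions \eqref{h1} and \eqref{h2} from \eqref{f2}, since in the autonomous case these play the role that (h2) played in Theorem \ref{t3}.

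For the outer boundary \eqref{h2} I would argue exactly as in Theorem \ref{t3}: assuming $JE'(u)+\lambda u=0$ for some $u\in K$ with $|u|=R_1$ and $\lambda>0$, the function $u$ solves $(1+\lambda)u^{(4)}=f(u)$ with the prescribed boundary conditions, whence $(1+\lambda)R_1^{2}=\int_0^1 f(u(t))u(t)\,dt$. Using $u(t)\le M_1(t)R_1\le \tfrac23 R_1$ from \eqref{r14} together with the monotonicity of $f$, and $\int_0^1 M_1(t)\,dt=\tfrac{4}{15}$, I bound the right-hand side by $\tfrac{4}{15}f(\tfrac23R_1)R_1$, which by the second inequality in \eqref{f2} is at most $R_1^{2}$. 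This forces $1+\lambda\le 1$, a contradiction, so \eqref{h2} holds.

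The delicate part, and the one I expect to be the main obstacle, is the inner boundary \eqref{h1}. The naive approach mimicking Theorem \ref{t3} would use the pointwise Harnack bound $u(t)\ge M_0(t)R_0$ and try to reduce to an inequality of the form $\int_0^1 M_0(t)f(M_0(t)R_0)\,dt\ge R_0$; this does \emph{not} follow from \eqref{f2}, because $M_0(t)\to 0$ as $t\to1$, so the pointwise bound is far too weak near the right endpoint. The remedy is to exploit that every $u\in K$ is convex with $u(0)=u'(0)=0$ and hence nondecreasing: combining this with $u(a)\ge M_0(a)|u|$ yields the much stronger bound $u(t)\ge u(a)\ge M_0(a)R_0$ for all $t\in[a,1]$. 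Then, assuming $JE'(u)-\lambda Ju=0$ with $u\in K$, $|u|=R_0$, $\lambda>0$, so that $u^{(4)}=f(u)+\lambda u$ and $R_0^{2}=\int_0^1 f(u)u\,dt+\lambda\int_0^1 u^{2}\,dt$, I would use $f$ nondecreasing to get $f(u(t))u(t)\ge f(M_0(a)R_0)\,M_0(a)R_0$ on $[a,1]$, hence $\int_0^1 f(u)u\,dt\ge (1-a)M_0(a)R_0\,f(M_0(a)R_0)\ge R_0^{2}$ by the first inequality in \eqref{f2}. Substituting back gives $\lambda\int_0^1 u^{2}\,dt\le 0$, contradicting $\lambda>0$ and $u\neq 0$; thus \eqref{h1} holds. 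The factor $1-a$ here is exactly what the $M_0(t)\to 0$ behaviour destroys in the pointwise approach, and is recovered only through monotonicity.

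With \eqref{h1}, \eqref{h2} and the inherited hypotheses in hand, I conclude as in Theorem \ref{t3}: Theorem \ref{t1} produces a critical point $u_m\in K_{R_0R_1}$ with $E(u_m)=m$, which by the equivalence between critical points of $E$ and classical solutions of \eqref{p} is a positive solution; and if in addition (h3) holds, the mountain-pass geometry is available and Theorem \ref{t2} produces a second critical point $u_c\in K_{R_0R_1}$ with $E(u_c)=c$, giving the second positive solution. As in Remark \ref{r}, $m<c$ guarantees $u_m\neq u_c$.
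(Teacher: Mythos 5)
Your proposal is correct and follows essentially the same route as the paper: the second inequality in \eqref{f2} is used to verify the outer boundary condition via the bound $u(t)\le M_1(t)R_1$ and $\int_0^1 M_1(t)\,dt=\tfrac{4}{15}$, while the inner boundary condition is obtained exactly as in the paper by restricting the integral to $[a,1]$ and using the monotonicity of $u$ together with $u(a)\ge M_0(a)R_0$. The only (immaterial) differences are bookkeeping: you isolate the term $\lambda\int_0^1 u^2\,dt\le 0$ to reach the contradiction where the paper instead derives the negation of the first inequality in \eqref{f2}, and you re-derive the outer estimate rather than citing that it implies (h2)(b).
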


\begin{proof}
Since $M_{1}\left( t\right) \leq 2/3$ for every $t\in \left[ 0,1\right] ,$
we have%
\begin{equation*}
\int_{0}^{1}M_{1}\left( t\right) f\left( M_{1}\left( t\right) R_{1}\right)
dt\leq f\left( \frac{2}{3}R_{1}\right) \int_{0}^{1}\frac{2}{3}t^{\frac{3}{2}%
}dt=\frac{4}{15}f\left( \frac{2}{3}R_{1}\right) .
\end{equation*}%
Then the inequality
\begin{equation*}
\frac{4}{15}f\left( \frac{2}{3}R_{1}\right) \leq R_{1},\text{ }
\end{equation*}%
or equivalently the second inequality in (\ref{f2}) is a sufficient
condition for (h2)(b) to hold. As concerns the first inequality in (\ref{f2}%
), let us remark that if $JE^{\prime }\left( u\right) -\lambda Ju=0$ for
some $u\in K$ with $\left\vert u\right\vert =R_{0}$ and $\lambda >0,$ then%
\begin{eqnarray}
R_{0}^{2} &=&\left\vert u\right\vert ^{2}=\int_{0}^{1}\left[ f\left( u\left(
t\right) \right) +\lambda u\left( t\right) \right] u\left( t\right) dt
\label{f3} \\
&\geq &\int_{a}^{1}\left[ f\left( u\left( t\right) \right) +\lambda u\left(
t\right) \right] u\left( t\right) dt.  \notag
\end{eqnarray}%
The function $u$ being nondecreasing, one has $u\left( t\right) \geq u\left(
a\right) $ for all $t\in \left[ a,1\right] .$ Also, from (\ref{hi}), $%
u\left( a\right) \geq M_{0}\left( a\right) \left\vert u\right\vert .$ Then
from (\ref{f3}),
\begin{eqnarray*}
R_{0}^{2} &\geq &\left( 1-a\right) \left[ f\left( M_{0}\left( a\right)
R_{0}\right) +\lambda M_{0}\left( a\right) R_{0}\right] M_{0}\left( a\right)
R_{0} \\
&>&\left( 1-a\right) f\left( M_{0}\left( a\right) R_{0}\right) M_{0}\left(
a\right) R_{0}.
\end{eqnarray*}%
Hence
\begin{equation*}
R_{0}>\left( 1-a\right) M_{0}\left( a\right) f\left( M_{0}\left( a\right)
R_{0}\right) ,
\end{equation*}%
i.e.~the opposite of the first inequality in (\ref{f2}).
\end{proof}

Clearly the inequalities (\ref{f2}) express the oscillation of the function $%
f\left( t\right) /t$ up and down the values $1/\left( 1-a\right) M_{0}\left(
a\right) ^{2}$ and $45/8.$

\begin{remark}[Existence asymptotic conditions]
The existence of two numbers $R_{0},$ $R_{1}$ satisfying (\ref{f2}) is
guaranteed by the asymptotic conditions%
\begin{equation*}
\lim \sup_{\tau \rightarrow 0}\frac{f\left( \tau \right) }{\tau }>\frac{1}{%
\left( 1-a\right) M_{0}\left( a\right) ^{2}}\ \ \ \text{and\ \ \ }\lim
\inf_{\tau \rightarrow \infty }\frac{f\left( \tau \right) }{\tau }<\frac{45}{%
8}.
\end{equation*}
\end{remark}

\begin{remark}[Multiplicity]
Theorems \ref{t3} and \ref{t4} can be used to obtain multiple positive
solutions. Indeed, if their assumptions are fulfilled for two pairs $\left(
R_{0},R_{1}\right) ,$ $\left( \overline{R}_{0},\overline{R}_{1}\right) ,$
then we obtain four solutions, provided that the sets $K_{R_{0}R_{1}}$ and $%
K_{\overline{R}_{0}\overline{R}_{1}}$ are disjoint. This happens if $%
0<R_{0}<R_{1}<\overline{R}_{0}<\overline{R}_{1}.$ We can even obtain
sequences of positive solutions; for instance, in connection with Theorem %
\ref{t4}, if
\begin{equation*}
\lim \sup_{\tau \rightarrow 0}\frac{f\left( \tau \right) }{\tau }>\frac{1}{%
\left( 1-a\right) M_{0}\left( a\right) ^{2}}\ \ \ \text{and\ \ \ }\lim
\inf_{\tau \rightarrow 0}\frac{f\left( \tau \right) }{\tau }<\frac{45}{8},
\end{equation*}%
then there exists a sequence $\left( u_{k}\right) $ of positive solutions
with $u_{k}\rightarrow 0$ as $k\rightarrow \infty .$ Also, if
\begin{equation*}
\lim \sup_{\tau \rightarrow \infty }\frac{f\left( \tau \right) }{\tau }>%
\frac{1}{\left( 1-a\right) M_{0}\left( a\right) ^{2}}\ \ \ \text{and\ \ \ }%
\lim \inf_{\tau \rightarrow \infty }\frac{f\left( \tau \right) }{\tau }<%
\frac{45}{8},
\end{equation*}%
then there exists a sequence $\left( u_{k}\right) $ of positive solutions
with $\left\vert u_{k}\right\vert \rightarrow \infty $ as $k\rightarrow
\infty .$
\end{remark}

\begin{remark}[Fixed point approach]
Under the assumptions of Theorem \ref{t3}, the existence of a solution in $%
K_{R_{0}R_{1}}$ can also be obtained via Krasnoselskii's theorem. Indeed,
the problem (\ref{p}) is equivalent to the fixed point problem (\ref{fpp})
in $X$ for the compact operator $N:K_{R_{0}R_{1}}\rightarrow K,$ $N\left(
u\right) =Jf\left( \cdot ,u\left( \cdot \right) \right).$

Let us check the condition (a)(i). Assume the contrary, i.e. $Nu<u$ for some
$u\in K$ with $\left\vert u\right\vert =R_{0}.$ Then $Nu=u-v$ for some $v\in
K\setminus \left\{ 0\right\} .$ This means that $\left( u-v\right) ^{\left(
4\right) }=f\left( t,u\right) $ in the sense of distributions. Now multiply
by $u$ and integrate to obtain%
\begin{equation*}
\left\vert u\right\vert ^{2}-\int_{0}^{1}u^{\prime \prime }\left( t\right)
v^{\prime \prime }\left( t\right) dt=\int_{0}^{1}f\left( t,u\left( t\right)
\right) u\left( t\right) dt.
\end{equation*}%
Since $v,u-v\in K,$ one has $v^{\prime \prime }\geq 0$ and $u^{\prime \prime
}-v^{\prime \prime }\geq 0$ in $\left[ 0,1\right] .$ Hence%
\begin{equation*}
\int_{0}^{1}u^{\prime \prime }\left( t\right) v^{\prime \prime }\left(
t\right) dt\geq \int_{0}^{1}v^{\prime \prime }\left( t\right)
^{2}dt=\left\vert v\right\vert ^{2}>0.
\end{equation*}%
Then%
\begin{equation*}
R_{0}^{2}=\left\vert u\right\vert ^{2}>\int_{0}^{1}f\left( t,u\left(
t\right) \right) u\left( t\right) dt.
\end{equation*}%
Next we use (\ref{hi}) to derive a contradiction to (h2) (a).

The condition (a)(ii) can be proved similarly.

Notice that under assumptions (h2)(a) and (b), a solution exists in $%
K_{R_{1}R_{0}}$ in case that $R_{1}<R_{0}.$ However this is not guaranteed
by the variational approach.

We may conclude that, compared to the fixed point approach, the variational
method gives an additional information about the solution, namely of being a
minimum for the energy functional. Moreover, a second solution of mountain
pass type can be guaranteed by the variational approach.
\end{remark}

The above approach was essentially based on the monotonicity assumption on $%
f,$ which was required by the Harnack type inequality (\ref{hi}). Thus a
natural question is if such an inequality can be established for functions $%
u $ satisfying the boundary conditions and $u^{\left( 4\right) }\geq 0,$
without the assumption that $u^{\left( 4\right) }$ is nondecreasing. In the
absence of the answer to this question, an alternative approach is possible
in a shell defined by two norms as shown in the next section.

\subsection{Localization in a shell defined by two norms}

In the previous section, a unilateral Harnack inequality was established for
functions $\ u$ satisfying the two point boundary conditions and with $%
u^{\left( 4\right) }$ nonnegative and nondecreasing in $\left[ 0,1\right] ,$
in terms of the energetic norm. If we renounce to the monotonicity of $%
u^{\left( 4\right) },$ then we have the following result in terms of the max
norm.

\begin{lemma}
\label{LH2}If $u\in C^{4}\left[ 0,1\right] $ satisfies $u(0)=u^{\prime
}(0)=u^{\prime \prime }(1)=u^{\prime \prime \prime }(1)=0$ and $u^{\left(
4\right) }\geq 0$ in $\left[ 0,1\right] ,$ then
\begin{equation}
u(t)\geq M\left( t\right) \,\left\Vert u\right\Vert _{\infty }\,\quad \text{%
for all \ }t\in \lbrack 0,1]\,,  \label{r15}
\end{equation}%
where \ $M\left( t\right) :=\dfrac{(3-t)\,t^2}{3}.$
\end{lemma}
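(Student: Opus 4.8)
The plan is to reduce the inequality to a pointwise estimate on the Green's function $G$ and then integrate against $u^{(4)}$. First I would run the monotonicity analysis from the opening of the proof of Lemma \ref{LH}, which uses only $u^{(4)}\ge 0$ (not its monotonicity): from $u^{(4)}\ge0$ and $u'''(1)=0$ one gets $u'''\le0$, hence $u''$ is nonincreasing; with $u''(1)=0$ this gives $u''\ge0$, so $u$ is convex; and from $u(0)=u'(0)=0$ it follows that $u$ is nondecreasing and nonnegative. In particular $\left\Vert u\right\Vert_{\infty}=u(1)$. Writing $v:=u^{(4)}$, which is continuous and nonnegative, the function $u$ is the unique element of $X$ with $u^{(4)}=v$ and the prescribed boundary conditions, so $u=Jv$ and, by \eqref{e-green},
\[
u(t)=\int_0^1 G(t,s)\,v(s)\,ds,\qquad u(1)=\int_0^1 G(1,s)\,v(s)\,ds .
\]
Since $v\ge0$, it then suffices to establish the pointwise comparison $G(t,s)\ge M(t)\,G(1,s)$ for all $s,t\in[0,1]$ and integrate.

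To land exactly on $M(t)=(3-t)t^2/3$, I would not estimate the ratio $G(t,s)/G(1,s)$ sharply, but instead interpose a simple majorant of $G(1,s)$. From \eqref{e-green}, $G(1,s)=\tfrac{s^2}{6}(3-s)\le\tfrac{s^2}{2}$ on $[0,1]$, i.e.\ $s^2\ge 2\,G(1,s)$. Hence it is enough to prove the single clean bound
\[
G(t,s)\ge \frac{(3-t)\,t^2}{6}\,s^2\qquad(s,t\in[0,1]),
\]
because multiplying $s^2\ge2\,G(1,s)$ by $(3-t)t^2/6$ then gives $G(t,s)\ge \tfrac{(3-t)t^2}{3}G(1,s)=M(t)G(1,s)$.

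The pointwise bound is checked in the two regions of \eqref{e-green} (the cases $s=0$ or $t=0$ being trivial). For $s\le t$, dividing by $s^2/6$ reduces it to $3t-s\ge(3-t)t^2$; here $3t-s\ge2t$ and the inequality $2t\ge(3-t)t^2$ is equivalent to $(t-1)(t-2)\ge0$, which holds for $t\in[0,1]$. For $t\le s$, dividing by $t^2/6$ reduces it to $h(s):=3s-t-(3-t)s^2\ge0$ on $[t,1]$; the function $h$ is concave (leading coefficient $-(3-t)<0$), and $h(t)=t(t-1)(t-2)\ge0$ while $h(1)=0$, so $h$ dominates the chord joining its endpoint values, which is nonnegative. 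Integrating the pointwise inequality against $v\ge0$ completes the argument.

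I expect the region $t\le s$ to be the main obstacle. A direct comparison of $G(t,s)$ with $G(1,s)$ there produces the sharp (and larger) constant $1/2$ rather than $1/3$, so the role of the intermediate majorant $G(1,s)\le s^2/2$ is precisely to absorb both branches of $G$ into the single stated constant $M(t)$; verifying $h\ge0$ via concavity is elementary once its two endpoint values are seen to be nonnegative.
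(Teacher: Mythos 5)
Your proof is correct and follows essentially the same route as the paper: both reduce the claim to the pointwise estimate $G(t,s)\ge \frac{(3-t)t^2}{6}s^2$ combined with the majorant $s^2\ge 2G(1,s)$ and then integrate against $u^{(4)}\ge 0$. The only differences are cosmetic --- you verify the lower bound on $G$ by elementary algebra (the bound $3t-s\ge 2t$ on one branch, concavity of $h$ on the other) where the paper differentiates $G(t,s)/s^2$ in $s$, and you use the monotonicity of $u$ to write $\left\Vert u\right\Vert_{\infty}=u(1)$ where the paper instead interchanges $\max_{t}$ with the integral.
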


\begin{proof}
Fix $t\in (0,1).$ Considering the expression of the Green's function given
in \eqref{e-green}, we have
\begin{eqnarray}
\lim_{s\rightarrow 0^{+}}\dfrac{G(t,s)}{s^{2}} &=&\lim_{s\rightarrow 0^{+}}%
\dfrac{3\,t-s}{6}=\dfrac{t}{2}>0\,,  \notag \\
\lim_{s\rightarrow 1^{-}}\dfrac{G(t,s)}{s^{2}} &=&\lim_{s\rightarrow 1^{-}}%
\dfrac{t^{2}}{6}\left( \dfrac{3}{s}-\dfrac{t}{s^{2}}\right) =\dfrac{t^{2}}{6}%
(3-t)>0\,.  \notag
\end{eqnarray}

Then, we can affirm that for every $t\in (0,1)$ fixed, the function $\frac{%
G(t,s)}{s^{2}}$ is continuous and positive on $[0,1]$ and it has strictly
positive maximum and minimum on $[0,1].$

Let
\begin{equation}
H(t,s)=\dfrac{G(t,s)}{s^{2}}=\left\{
\begin{array}{lc}
H_{1}(t,s)=\dfrac{3\,t-s}{6}\,, & 0\leq s\leq t\leq 1, \\
H_{2}(t,s)=\dfrac{t^{2}}{6}\left( \dfrac{3}{s}-\dfrac{t}{s^{2}}\right) \,, &
0\leq t<s\leq 1,%
\end{array}%
\right.
\end{equation}

We have
\begin{equation*}
\dfrac{\partial }{\partial s}H_{1}(t,s)=-\dfrac{1}{6}<0.
\end{equation*}

Then $H_{1}(t,s)$ is a decreasing function for $s\in \lbrack 0,t];$ hence we
have that
\begin{equation*}
H_{1}(t,s)\leq H_{1}(t,0)=\dfrac{t}{2}\ \ \ \text{for\ \ }0\leq s\leq t\leq
1.
\end{equation*}

Also, we obtain

\begin{equation*}
\dfrac{\partial }{\partial s}H_{2}(t,s)=\dfrac{t^{2}}{6}\left( -\dfrac{3}{%
s^{2}}+\dfrac{2\,t}{s^{3}}\right)
\end{equation*}%
and
\begin{equation*}
\dfrac{\partial ^{2}}{\partial s^{2}}H_{2}(t,s)=\dfrac{t^{2}}{s^{4}}\left(
s-t\right) \geq 0\,,\quad 0\leq t<s\leq 1\,,
\end{equation*}%
then
\begin{equation*}
\dfrac{\partial }{\partial s}H_{2}(t,s)\leq \dfrac{\partial }{\partial s}%
H_{2}(t,s)_{\mid s=1}=\dfrac{t^{2}}{6}(2\,t-3)<0\,,
\end{equation*}%
hence%
\begin{equation*}
H_{2}(t,s)\geq H_{2}(t,1)=\dfrac{t^{2}}{6}(3-t)\ \ \text{for\ \ }0\leq
t<s\leq 1.
\end{equation*}%
Combining the previous results we obtain the following estimations for the
Green function
\begin{equation}  \label{e-des-green}
\dfrac{t^{2}}{6}(3-t)\,s^{2}\leq G(t,s)\leq \dfrac{s^{2}}{2}\,\quad \text{%
for all }(t,s)\in \lbrack 0,1]\times \lbrack 0,1]\,.
\end{equation}

Let $u\in C^{4}\left[ 0,1\right] $ satisfy $u(0)=u^{\prime }(0)=u^{\prime
\prime }(1)=u^{\prime \prime \prime }(1)=0$ and $u^{\left( 4\right) }\geq 0$
in $\left[ 0,1\right] .$ Then
\begin{eqnarray}
u(t) &=&\int_{0}^{1}G(t,s)u^{\left( 4\right) }\left( s\right) \,ds\geq
\int_{0}^{1}\dfrac{(3-t)\,t^{2}\,s^{2}}{6}\,u^{\left( 4\right) }\left(
s\right) \,ds  \notag \\
&=&\dfrac{(3-t)\,t^2}{6}\int_{0}^{1}\,s^{2}u^{\left( 4\right) }\left(
s\right) \,ds\geq \dfrac{(3-t)\,t^2}{3}\int_{0}^{1}\,\left\{\max_{t\in
\lbrack 0,1]}G(t,s)\right\}\,u^{\left( 4\right) }\left( s\right) \,ds  \notag
\\
&\ge&\dfrac{(3-t)\,t^2}{3}\max_{t\in \lbrack
0,1]}\left\{\int_{0}^{1}\,G(t,s)u^{\left( 4\right) }\left( s\right) \,ds
\right\}= \dfrac{(3-t)\,t^2}{3}\left\Vert u\right\Vert _{\infty }.  \notag
\end{eqnarray}
\end{proof}

Notice that, since $\left\Vert u\right\Vert \leq \left\Vert u\right\Vert
_{\infty },$ the inequality (\ref{r15}) also gives%
\begin{equation*}
u(t)\geq M\left( t\right) \,\left\Vert u\right\Vert \,\quad \text{for all \ }%
t\in \lbrack 0,1].
\end{equation*}

Using Lemma \ref{LH2}, the existence of a positive solution can be
immediately obtained via Krasnoselskii's theorem.

\begin{theorem}
Assume that there exist positive numbers $\alpha ,\beta ,$ $\alpha \neq
\beta $ such that%
\begin{equation}
\alpha \leq \left( J\underline{f_\alpha}\right) \left( 1\right) \ \ \ \text{%
and\ \ \ }\beta \geq \left( J\overline{f_\beta}\right) \left( 1\right) ,
\label{r0}
\end{equation}%
where%
\begin{eqnarray*}
\underline{f_\alpha}\left( t\right) &=&\min \left\{ f\left( t,u\right) :\ \
M\left( t\right) \alpha \leq u\leq \alpha \right\} , \\
\overline{f_\beta}\left( t\right) &=&\max \left\{ f\left( t,u\right) :\ \
M\left( t\right) \beta \leq u\leq \beta \right\} .
\end{eqnarray*}%
Then the problem \emph{(\ref{p})} has at least one positive solution $u$
such that%
\begin{equation*}
R_{0}\leq \left\Vert u\right\Vert _{\infty }\leq R_{1},
\end{equation*}%
where $R_{0}=\min \left\{ \alpha ,\beta \right\} ,\ R_{1}=\max \left\{
\alpha ,\beta \right\} .$
\end{theorem}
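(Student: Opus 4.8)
The plan is to treat the problem through its fixed point reformulation \eqref{fpp}, writing $u = Nu$ with $N(u) = Jf(\cdot, u(\cdot))$, and to invoke Krasnoselskii's Theorem \ref{tk} in the Banach space $C[0,1]$ equipped with the max norm $\|\cdot\|_\infty$, working in the cone
\[
K := \left\{ u \in C[0,1] : u \geq 0, \ u \text{ nondecreasing}, \ u(t) \geq M(t)\,\|u\|_\infty \ \text{for all } t \in [0,1] \right\},
\]
where $M$ is the weight from Lemma \ref{LH2}. First I would verify that $N$ leaves $K$ invariant: for $u \in K$ one has $u \geq 0$, hence $f(\cdot,u) \geq 0$, and $w := Nu$ solves the boundary value problem with right-hand side $w^{(4)} = f(\cdot,u) \geq 0$. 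The opening argument of the proof of Lemma \ref{LH}, which uses only $w^{(4)} \geq 0$ and the boundary conditions, shows that $w$ is nonnegative, nondecreasing and convex, while Lemma \ref{LH2} supplies the Harnack bound $w(t) \geq M(t)\|w\|_\infty$; thus $w \in K$. Compactness of $N$ on $K_{R_0R_1}$ is inherited from the continuity of $G$ and $f$ together with the compact embedding recorded after \eqref{fpp}.

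The key simplification, established next, is that every $u \in K$ is nondecreasing and nonnegative, so $\|u\|_\infty = u(1)$; applied to $Nu \in K$ this gives $\|Nu\|_\infty = (Nu)(1) = (Jf(\cdot,u))(1) = \int_0^1 G(1,s)\,f(s,u(s))\,ds$. For $u \in K$ with $\|u\|_\infty = \alpha$, membership in $K$ sandwiches $M(t)\alpha \leq u(t) \leq \alpha$, so $f(t,u(t)) \geq \underline{f_\alpha}(t)$, and since $G(1,\cdot)\geq 0$ the first inequality in \eqref{r0} yields $\|Nu\|_\infty \geq (J\underline{f_\alpha})(1) \geq \alpha$. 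Symmetrically, for $u \in K$ with $\|u\|_\infty = \beta$ one gets $f(t,u(t)) \leq \overline{f_\beta}(t)$, whence $\|Nu\|_\infty \leq (J\overline{f_\beta})(1) \leq \beta$ by the second inequality in \eqref{r0}.

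It then remains to convert these two scalar estimates into the intrinsic ordering conditions of Theorem \ref{tk}, and this translation is where I expect the only genuine subtlety. The mechanism is that any nonzero $v \in K$ satisfies $v(1) \geq M(1)\|v\|_\infty = \tfrac{2}{3}\|v\|_\infty > 0$, because $M(1)>0$. Assuming $\|u\|_\infty = \alpha$ and, for contradiction, $Nu < u$, the element $v := u - Nu$ lies in $K\setminus\{0\}$ and evaluation at $t=1$ gives $(Nu)(1) = \alpha - v(1) < \alpha$, contradicting $\|Nu\|_\infty = (Nu)(1) \geq \alpha$; hence $Nu \nless u$ on $\|u\|_\infty = \alpha$. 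In the same way, $Nu > u$ with $\|u\|_\infty = \beta$ forces $(Nu)(1) = \beta + (Nu - u)(1) > \beta$, contradicting $\|Nu\|_\infty \leq \beta$, so $Nu \ngtr u$ on $\|u\|_\infty = \beta$. The decisive point is precisely that the cone forces $\|u\|_\infty = u(1)$ and that the Harnack weight is strictly positive at the endpoint, which upgrades $v \neq 0$ to a strict inequality at $t=1$.

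Finally I would assemble the two cases. If $\alpha < \beta$ then $R_0 = \alpha$, $R_1 = \beta$, and the two conclusions are exactly the compression hypotheses (a)(i)–(ii) of Theorem \ref{tk}; if $\alpha > \beta$ then $R_0 = \beta$, $R_1 = \alpha$, and they are the expansion hypotheses (b)(i)–(ii). The assumption $\alpha \neq \beta$ guarantees a nondegenerate shell $R_0 < R_1$. In either case Theorem \ref{tk} produces a fixed point $u \in K_{R_0R_1}$; by construction it solves \eqref{p}, and since $u \in K$ with $\|u\|_\infty \geq R_0 > 0$ we get $u(t) \geq M(t)\|u\|_\infty > 0$ on $(0,1]$, so $u$ is a positive solution with $R_0 \leq \|u\|_\infty \leq R_1$, as claimed.
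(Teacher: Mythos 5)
Your proposal is correct and follows essentially the same route as the paper: Krasnoselskii's theorem in $C[0,1]$ with a cone built from the Harnack weight $M$ of Lemma \ref{LH2}, invariance of the cone under $N$ via the Green's function estimates, and the two boundary conditions obtained by writing $Nu=u\mp v$ with $v\in K\setminus\{0\}$ and exploiting that $v$ must be strictly positive where $M>0$. The only cosmetic difference is that you add monotonicity to the cone so that both evaluations take place at $t=1$, whereas the paper's cone omits it and instead evaluates at the maximizer $t_{0}$ of $u$ in the second case, using $G(t,s)\leq G(1,s)$.
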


\begin{proof}
The problem (\ref{p}) is equivalent to the fixed point problem $N\left(
u\right) =u$ in $C\left[ 0,1\right] ,$ where $N\left( u\right) =Jf\left(
\cdot,u\left( \cdot \right) \right).$

In the space $C\left[ 0,1\right]$ we consider the cone%
\begin{equation*}
K=\left\{ u\in C\left[ 0,1\right] :\ \text{ }u\left( 0\right) =0,\ u\left(
t\right) \geq M\left( t\right) \left\Vert u\right\Vert _{\infty }\text{ for
all }t\in \left[ 0,1\right] \right\} .
\end{equation*}%
According to inequalities \eqref{e-des-green}, it is not difficult to verify
that $N\left( K\right) \subset K.$ Also $N$ is a compact operator.

Now we show that the required boundary conditions from Krasnoselskii's
theorem are satisfied. Assume by contradiction that $Nu<u$ for some $u\in K$
with $\left\Vert u\right\Vert _{\infty }=\alpha .$ Then $Nu=u-v$ for some $%
v\in K\setminus \left\{ 0\right\} .$ Hence
\begin{equation}
u\left( t\right) -v\left( t\right) =Jf\left( t,u\left( t\right) \right) .
\label{r1}
\end{equation}%
We have
\begin{equation*}
M\left( t\right) \alpha \leq u\left( t\right) \leq \alpha \ \ \ \text{for
all }t\in \left[ 0,1\right] .
\end{equation*}%
Hence
\begin{equation*}
f\left( t,u\left( t\right) \right) \geq \underline{f_{\alpha }}\left(
t\right) .
\end{equation*}%
Since $J$ is a positive linear operator, it preserves ordering, so $Jf\left(
t,u\left( t\right) \right) \geq \left( J\underline{f_{\alpha }}\right)
\left( t\right) .$ \ Returning to (\ref{r1}), we deduce that
\begin{equation}
u\left( t\right) -v\left( t\right) \geq \left( J\underline{f_{\alpha }}%
\right) \left( t\right) .  \label{r2}
\end{equation}%
Since $v\left( t\right) \geq M\left( t\right) \left\Vert v\right\Vert
_{\infty }>0,$ for $t>0,$ (\ref{r2}) yields%
\begin{equation*}
\alpha =\left\Vert u\right\Vert _{\infty }\geq u\left( 1\right) >u\left(
1\right) -v\left( 1\right) \geq \left( J\underline{f_{\alpha }}\right)
\left( 1\right) ,
\end{equation*}%
a contradiction to our assumption.

Next assume that $Nu>u$ for some $u\in K$ with $\left\Vert u\right\Vert
_{\infty }=\beta $. Then $Nu=u+v$ for some $v\in K\setminus \left\{
0\right\} $ and, since $G(t,s)\leq G(1,s)$ for all $t,\,s\in \lbrack 0,1]$,
we have
\begin{equation}
u\left( t\right) +v\left( t\right) =Jf\left( t,u\left( t\right) \right) \leq
\left( J\overline{f_{\beta }}\right) \left( t\right) \leq \left( J\overline{%
f_{\beta }}\right) \left( 1\right) .  \label{r3}
\end{equation}%
Let $t_{0}$ be such that $u\left( t_{0}\right) =\left\Vert u\right\Vert
_{\infty }=\beta >0.$ Since $u\left( 0\right) =0,$ one has $t_{0}>0$ and so $%
v\left( t_{0}\right) \geq M\left( t_{0}\right) \left\Vert v\right\Vert
_{\infty }>0.$ Then, for $t=t_{0},$ (\ref{r3}) gives%
\begin{equation*}
\beta <\left( J\overline{f_{\beta }}\right) \left( 1\right) ,
\end{equation*}%
which contradicts our assumption. Thus Theorem \ref{tk} applies.

We note that if $\alpha <\beta ,$ then (\ref{r0}) represents the compression
condition, while if $\alpha >\beta ,$ then (\ref{r0}) expresses the
expansion condition.
\end{proof}

Next we are interested into two positive solutions for (\ref{p}). We shall
succeed this by the variational approach based on Theorems \ref{t1} and \ref%
{t2} applied in the Hilbert space $X=\{u\in H^{2}(0,1):$ $\ u(0)=u^{\prime
}(0)=0\}$ and to the norms $\left\vert \cdot \right\vert $ (given by (\ref%
{en})) and $\left\Vert \cdot \right\Vert =\left\Vert \cdot \right\Vert
_{L^{2}\left( 0,1\right) }.$

Let us consider the cone
\begin{equation*}
K=\left\{ u\in X:\ u\text{ convex and }u(t)\geq M\left( t\right)
\,\left\Vert u\right\Vert \,\ \text{for all \ }t\in \lbrack 0,1]\right\}
\end{equation*}%
and the numbers $R_{0},R_{1}$ such that $0<R_{0}<\left\Vert \phi \right\Vert
R_{1},$ where $\phi =\phi _{1}/\left\vert \phi _{1}\right\vert $ and $\phi
_{1}$ is the eigenfunction given by (\ref{ef}). Also, let
\begin{equation*}
K_{R_{0}R_{1}}=\left\{ u\in K:\ \left\Vert u\right\Vert \geq R_{0},\
\left\vert u\right\vert \leq R_{1}\right\} .
\end{equation*}

Denote%
\begin{eqnarray*}
\underline{g}\left( t\right) &=&\min \left\{ f\left( t,u\right) :\ M\left(
t\right) R_{0}\leq u\leq c_{\infty }R_{1}\right\}, \\
\overline{g}\left( t\right) &=&\max \left\{ f\left( t,u\right) :\ M\left(
t\right) R_{0}\leq u\leq c_{\infty }R_{1}\right\} ,
\end{eqnarray*}%
where $c_{\infty }>0$ is such that $\left\Vert v\right\Vert _{\infty }\leq
c_{\infty }\left\vert v\right\vert $ for all $v\in K.$ For example we may
take $c_{\infty }=2/3,$ since for any $v\in K,$ Hölder's inequality gives%
\begin{equation}
v\left( t\right) =\int_{0}^{t}\int_{0}^{s}v^{\prime \prime }\left( \tau
\right) d\tau ds\leq \left\vert v\right\vert \int_{0}^{t}\sqrt{s}\leq \frac{2%
}{3}\left\vert v\right\vert .  \label{r13}
\end{equation}

Our assumptions are as follows: \bigskip

\begin{description}
\item[(H1)] There exist $R_{0},R_{1}$ with $0<R_{0}<\left\Vert \phi
\right\Vert R_{1}$ such that

\item[(a)] $R_{0}\leq \left\Vert J\underline{g}\right\Vert ,$

\item[(b)] $R_{1}\geq c_{\infty }\left\Vert \overline{g}\right\Vert
_{L^{1}\left( 0,1\right) }.$

\item[(H2)] The functional $E$ has the mountain pass geometry in $%
K_{R_{0}R_{1}}$ and there exists $\rho >0$ such that%
\begin{equation}
E\left( u\right) \geq c+\rho  \label{r10}
\end{equation}%
for all $u\in K_{R_{0}R_{1}}$ which simultaneously satisfy $\left\Vert
u\right\Vert =R_{0}$ and $\left\vert u\right\vert =R_{1}.$
\end{description}

\begin{theorem}
Under assumptions \emph{(H1), (H2)}, the problem \emph{(\ref{p})} has at
least two positive solutions $u_{m},u_{c}\in K_{R_{0}R_{1}}$ with $E\left(
u_{m}\right) =m$ and $E\left( u_{c}\right) =c$, with $m$ and $c$ defined on
\emph{\eqref{e-m}} and \emph{\eqref{e-c}} respectively.
\end{theorem}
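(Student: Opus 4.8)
The plan is to verify that the energy functional $E$ satisfies every hypothesis of the abstract Theorems \ref{t1} and \ref{t2} on the conical shell $K_{R_0R_1}$, now built from the two genuinely different norms $\|\cdot\|=\|\cdot\|_{L^2(0,1)}$ and the energetic norm $|\cdot|$, and then to read off two distinct critical points by Remark \ref{r}. First I would dispose of the structural hypotheses. The invariance \eqref{c1}, $(I-JE')(K)\subset K$, is where the two-norm setting pays off: for $u\in K$ one has $u(t)\geq M(t)\|u\|\geq0$, hence $f(\cdot,u)\geq0$, and $w:=N(u)=Jf(\cdot,u)$ solves $w^{(4)}=f(\cdot,u)\geq0$ together with the cantilever boundary conditions; arguing as in the first lines of the proof of Lemma \ref{LH}, $w$ is convex, while Lemma \ref{LH2} gives $w(t)\geq M(t)\|w\|_{\infty}\geq M(t)\|w\|$, so $w\in K$. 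The decisive point is that this uses only $f(t,\mathbb{R}_+)\subset\mathbb{R}_+$ and not any monotonicity of $f$. The (MPSS) condition follows from the compactness of $N=I-JE'$, exactly as in the proof of Theorem \ref{t3}, and the boundedness conditions \eqref{b1}, \eqref{b2} hold because on the relevant boundary pieces of $K_{R_0R_1}$ one has $|u|\leq R_1$, so $u$ stays bounded in $X$ and $JE'$ maps bounded sets into bounded sets.

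The heart of the argument is the compression boundary condition, \eqref{h1} and \eqref{h2}. For any $u\in K_{R_0R_1}$ the cone inequality together with \eqref{r13} furnishes the two-sided pointwise enclosure $M(t)R_0\leq M(t)\|u\|\leq u(t)\leq c_{\infty}|u|\leq c_{\infty}R_1$ on the boundary under consideration, so that $f(t,u(t))$ is squeezed between $\underline{g}(t)$ and $\overline{g}(t)$. To establish \eqref{h1}, suppose $JE'(u)-\lambda Ju=0$ for some $u\in K_{R_0R_1}$ with $\|u\|=R_0$ and $\lambda>0$; then $u=J[f(\cdot,u)+\lambda u]$, and since $u(t)\geq M(t)R_0$ forces $f(t,u(t))\geq\underline{g}(t)$ while $\lambda Ju>0$ on $(0,1]$, positivity of the Green's operator $J$ yields $u>J\underline{g}\geq0$ on $(0,1)$, whence $R_0=\|u\|>\|J\underline{g}\|$, contradicting (H1)(a). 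To establish \eqref{h2}, suppose $JE'(u)+\lambda u=0$ for some $u\in K_{R_0R_1}$ with $|u|=R_1$ and $\lambda>0$; then $(1+\lambda)u^{(4)}=f(\cdot,u)$, and multiplying by $u$ and integrating by parts with the boundary conditions gives $R_1^2=|u|^2=\tfrac{1}{1+\lambda}\int_0^1 f(t,u)u\,dt$. Using $f(t,u(t))\leq\overline{g}(t)$ and $u(t)\leq c_{\infty}R_1$, this is strictly less than $c_{\infty}R_1\|\overline{g}\|_{L^1(0,1)}$, so $R_1<c_{\infty}\|\overline{g}\|_{L^1(0,1)}$, contradicting (H1)(b).

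It then remains to supply the last ingredients of Theorems \ref{t1} and \ref{t2}. The functional $E$ is bounded below on $K_{R_0R_1}$, because there $0\leq u(t)\leq c_{\infty}R_1$ keeps $\int_0^1 F(t,u)\,dt$ bounded, so $m>-\infty$. Since every admissible mountain-pass path starts at a point whose $E$-value is at least $m$, one has $m\leq c$; consequently the separation hypothesis \eqref{r10} of (H2), namely $E(u)\geq c+\rho$ on $\{\|u\|=R_0\}\cap\{|u|=R_1\}$, delivers simultaneously $E(u)\geq m+\rho$ (hypothesis \eqref{r11} of Theorem \ref{t1}) and $|E(u)-c|\geq\rho$ (hypothesis \eqref{r12} of Theorem \ref{t2}) on that set, while the mountain-pass geometry is precisely the first half of (H2). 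Theorem \ref{t1} thus yields $u_m\in K_{R_0R_1}$ with $E'(u_m)=0$ and $E(u_m)=m$, and Theorem \ref{t2} yields $u_c\in K_{R_0R_1}$ with $E'(u_c)=0$ and $E(u_c)=c$. The mountain-pass geometry forces $m<c$, so $u_m\neq u_c$ by Remark \ref{r}; both lie in $K_{R_0R_1}$, hence $u(t)\geq M(t)R_0>0$ on $(0,1]$, and as critical points of $E$ they are classical solutions of \eqref{p}. This produces the two desired positive solutions.

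I expect the compression boundary condition to be the main obstacle. The delicate points are making the pointwise enclosure $M(t)R_0\leq u(t)\leq c_{\infty}R_1$ match exactly the intervals used to define $\underline{g}$ and $\overline{g}$, and securing the strict inequalities---coming from $\lambda>0$ and the strict positivity of the Green's function on $(0,1)$---that convert the boundary equalities into genuine contradictions with (H1). A secondary care is to observe that \eqref{b1} and \eqref{b2}, though phrased over all of $K$ in the abstract framework, are in fact only required on the shell boundary, where $|u|\leq R_1$ keeps $u$ bounded in $X$.
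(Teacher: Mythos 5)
Your proposal is correct and follows essentially the same route as the paper: boundedness from below via the pointwise enclosure $M(t)R_0\leq u(t)\leq c_\infty R_1$, the observation that $m<c$ turns \eqref{r10} into both \eqref{r11} and \eqref{r12}, the identical contradiction arguments for \eqref{h1} and \eqref{h2} against (H1)(a) and (H1)(b), and the conclusion via Theorems \ref{t1}, \ref{t2} and Remark \ref{r}. The only difference is that you explicitly verify the cone invariance \eqref{c1} using Lemma \ref{LH2} without any monotonicity of $f$ --- a step the paper's proof leaves implicit even though its earlier verification in Theorem \ref{t3} relied on (h1) --- which is a worthwhile addition rather than a deviation.
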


\begin{proof}
For $u\in K_{R_{0}R_{1}},$ one has%
\begin{equation*}
M\left( t\right) \,R_{0}\leq M\left( t\right) \,\left\Vert u\right\Vert \leq
u(t)\leq \left\Vert u\right\Vert _{\infty }\leq c_{\infty }\,|u|\leq
c_{\infty }\,R_{1}.
\end{equation*}%
It follows that%
\begin{equation*}
F\left( t,u\left( t\right) \right) \leq \omega :=\max \{F\left( t,u\right)
:\ 0\leq t\leq 1,\ M\left( t\right) R_{0}\leq u\leq c_{\infty }R_{1}\},
\end{equation*}%
whence, for all $u\in K_{R_{0}R_{1}}$, it is fulfilled that
\begin{equation*}
E\left( u\right) =\frac{1}{2}\left\vert u\right\vert
^{2}-\int_{0}^{1}F\left( t,u\left( t\right) \right) dt\geq -\omega ,
\end{equation*}%
and so, $m>-\infty $.

Next, from $c>m$ we see that (\ref{r10}) guarantees both (\ref{r11}) and (%
\ref{r12}). It remains to check the compression boundary condition given by (%
\ref{h1}), (\ref{h2}). Assume first that (\ref{h1}) does not hold. Then $%
JE^{\prime }\left( u\right) -\lambda Ju=0\ \ $for some $u\in
K_{R_{0}R_{1}},\ \left\Vert u\right\Vert =R_{0}$ and$\ \lambda >0.$

Then, for $t>0,$
\begin{equation*}
u\left( t\right) =J\left( f\left( t,u\left( t\right) \right) +\lambda
u\left( t\right) \right) >Jf\left( t,u\left( t\right) \right) \geq \left( J%
\underline{g}\right) \left( t\right) \geq 0.
\end{equation*}%
Taking the $L^{2}$-norm, we deduce%
\begin{equation*}
R_{0}=\left\Vert u\right\Vert >\left\Vert J\underline{g}\right\Vert ,
\end{equation*}%
which contradicts (H1)(a).

Next assume that $JE^{\prime }\left( u\right) +\lambda u=0\ \ $for some $%
u\in K_{R_{0}R_{1}},\ \left\vert u\right\vert =R_{1}\ $and $\lambda >0.$ Then%
\begin{equation*}
\left( 1+\lambda \right) u^{\left( 4\right) }=f\left( t,u\left( t\right)
\right) ,
\end{equation*}%
whence, arguing as in the proof of Theorem \ref{t3}, we deduce that
\begin{equation*}
\left( 1+\lambda \right) R_{1}^{2}=\int_{0}^{1}u\left( t\right) f\left(
t,u\left( t\right) \right) dt.
\end{equation*}%
Consequently%
\begin{equation*}
R_{1}<c_{\infty }\left\Vert \overline{g}\right\Vert _{L^{1}\left( 0,1\right)
},
\end{equation*}%
which contradicts (H1)(b).
\end{proof}

\begin{remark}
In the autonomous case, if $f=f\left( u\right) ,$ and $f$ is nondecreasing
on $\mathbb{R}_{+},$ a sufficient condition for (H1)(a) to hold is%
\begin{equation*}
R_{0}\leq f\left( M\left( a\right) R_{0}\right) \left\Vert J\chi _{\lbrack
a,1]}\right\Vert ,
\end{equation*}%
where $a$ is some number from $\left( 0,1\right) $ and $\chi _{\left[ a,1%
\right] }$ is the characteristic function of the interval $\left[ a,1\right]$%
. Also in this case, (H1)(b) reduces to
\begin{equation*}
R_{1}\geq c_{\infty }f\left( c_{\infty }R_{1}\right) .
\end{equation*}
\end{remark}

\subsection{An example}

We are going to see an example inspired by that in \cite{Pre}, to which we
can apply Theorem \ref{t4}.

\begin{example}
Let $0\leq p\leq \frac{1}{2}$ and
\begin{equation}
f(t,u)=f(u)=\left\{
\begin{array}{lc}
p\,u^{p}\,, & 0\leq u\leq 1\,, \\
p\,u^{2}\,, & 1\leq u\leq b\,, \\
p((u-b)^{p}+b^{2})\,, & u\geq b,\,%
\end{array}%
\right.  \label{pp}
\end{equation}%
where $b>2$ is chosen below.

This function is positive and nondecreasing in $\mathbb{R}_{+}.$ For $1\leq
u\leq 2,$ we have
\begin{equation*}
F(u)=\dfrac{p}{p+1}+\dfrac{p}{3}(u^{3}-1)\leq \dfrac{p}{p+1}+\dfrac{7\,p}{3}=%
\dfrac{p\,(10+7\,p)}{3\,(p+1)}\,,
\end{equation*}%
and since $F$ is nondecreasing,%
\begin{equation*}
F\left( u\right) \leq \dfrac{p\,(10+7\,p)}{3\,(p+1)}\,\ \ \ \text{for }0\leq
u\leq 2.
\end{equation*}

Choose $r=2.$ Then, for $u\in K$ and $|u|=2$ according to (\ref{r13}), $%
|u|_{\infty }\leq \frac{2}{3}|u|<2$ and so, recalling $0\leq p\leq \frac{1}{2%
},$
\begin{equation*}
E(u)=\dfrac{|u|^{2}}{2}-\,\int_{0}^{1}F(u(t))\,dt\geq 2-\dfrac{p\,(10+7\,p)}{%
3\,(p+1)}\geq \dfrac{1}{2}\,.
\end{equation*}

We take for $u_{0}$ the normalized function
\begin{equation*}
u_{0}=\dfrac{\phi _{1}}{|\phi _{1}|}\,,\quad |u_{0}|=1<r=2\,,
\end{equation*}%
for which
\begin{equation*}
E(u_{0})=\dfrac{|u_{0}|^{2}}{2}-\int_{0}^{1}F(u_{0}\left( t\right))\,dt=%
\dfrac{1}{2}-\int_{0}^{1}F(u_{0}\left(t\right) )\,dt<\dfrac{1}{2}\,.
\end{equation*}

Next we choose $u_{1}=b\,u_{0}/\left\Vert u_{0}\right\Vert _{\infty }.$ Then
$\left\vert u_{1}\right\vert =b/\left\Vert u_{0}\right\Vert _{\infty }>2$ if
we choose $b>2\left\Vert u_{0}\right\Vert _{\infty }.$ Also%
\begin{equation*}
E(u_{1})\leq \dfrac{b^{2}}{2\left\Vert u_{0}\right\Vert _{\infty }^{2}}%
-\int_{\left( u_{1}>1\right) }F\left( u_{1}\left( t\right) \right) dt.\text{
}
\end{equation*}%
Since $\left\Vert u_{1}\right\Vert _{\infty }=b>2$ and $u_{1}(0)=0$ the
level set $\left( u_{1}>1\right) $ is a proper subset of $\left[ 0,1\right]
. $ Also $u_{1}\left( t\right) \leq b$ for all $t.$ Hence on the level set $%
\left( u_{1}>1\right) $ we have
\begin{equation*}
F\left( u_{1}\right) =\frac{p}{p+1}+\frac{p}{3}\left( u_{1}^{3}-1\right) >%
\frac{p}{3}u_{1}^{3}.
\end{equation*}%
Then
\begin{equation*}
E(u_{1})<\dfrac{b^{2}}{2\left\Vert u_{0}\right\Vert _{\infty }^{2}}-\frac{%
pb^{3}}{3\left\Vert u_{0}\right\Vert _{\infty }^{3}}\int_{\left(
u_{1}>1\right) }u_{0}\left( t\right) ^{3}dt.
\end{equation*}%
Taking into account that the level set $\left( u_{1}>1\right) $ enlarges as $%
b$ increases, we can see that the right side of the last inequality tends to
$-\infty $ as $b\rightarrow +\infty .$ Thus we may choose $b$ large enough
to have
\begin{equation*}
E(u_{1})<\dfrac{1}{2}.
\end{equation*}%
Hence the assumption (h3) of Theorem \ref{t4} is satisfied. Also, by
\begin{equation*}
\lim_{s\rightarrow 0}\dfrac{f(s)}{s}=+\infty \,,\quad \text{and}\quad
\lim_{s\rightarrow +\infty }\dfrac{f(s)}{s}=0\,,
\end{equation*}%
we may find $R_{0}$ (small enough) and $R_{1}$ (large enough), such that $%
u_{0}$ and $u_{1}$ belong to $K_{R_{0}R_{1}}$ \ and the conditions (\ref{f2}%
) hold. Therefore, according to Theorem \ref{t4}, the problem (\ref{p}) with
$f$ given by (\ref{pp}) and $b$ sufficiently large has two positive
solutions.
\end{example}

\section*{Acknowledgements}

The first and third authors were partially supported by Ministerio de Educaci%
ón y Ciencia, Spain, and FEDER, Project MTM2013-43014-P.

The second author was supported by a grant of the Romanian National
Authority for Scientific Research, CNCS -- UEFISCDI, project number
PN-II-ID-PCE-2011-3-0094.

The third author is supported by a FPU scholarship, Ministerio de Educaci\'on, Cultura y Deporte, Spain.

The fourth author is thankful to the Department of Mathematical Analysis, 
University of Santiago de Compostela, Spain, where a part of the paper was prepared,
during his visit.

\medskip

\textit{Authors' addresses:}

\bigskip

Alberto Cabada\newline
Departamento de An\'alise Matem\'atica\newline
Facultade de Matem\'aticas\newline
Universitade de Santiago de Compostela\newline
Santiago de Compostela, Spain\newline
e--mail address: \verb|alberto.cabada@usc.es|

\bigskip

Radu Precup\newline
Department of Mathematics,\newline
Babe\c{s}-Bolyai University, \newline
400084 Cluj-Napoca, Romania, \newline
e--mail address: \verb|r.precup@math.ubbcluj.ro|

\bigskip

Lorena Saavedra\newline
Departamento de An\'alise Matem\'atica\newline
Facultade de Matem\'aticas\newline
Universitade de Santiago de Compostela\newline
Santiago de Compostela, Spain\newline
e--mail address: \verb|lorena.saavedra@usc.es|

\bigskip

Stepan Tersian\newline
Department of Mathematics \newline
University of Ruse\newline
7017 Ruse, Bulgaria\newline
e--mail address: \verb|sterzian@uni-ruse.bg|


\begin{thebibliography}{99}
\bibitem{cacima} A. Cabada, J.A. Cid and B. Máquez-Villamarín, \textsl{%
Computation of Green's functions for boundary value problems with Mathematica%
}, Appl. Math. Comput.\textbf{\ 219} (2012), 1919--1936.

\bibitem{CaT} A. Cabada and S.Tersian. \textsl{\ Multiplicity of solutions
of a two point boundary value problem for a fourth-order equation}, Appl.
Math. Comput.\textbf{\ 219} (2013), 5261--5267.

\bibitem{CG} X. Cai and Z. Guo, \textsl{\ Existence of solutions of
nonlinear fourth order discrete boundary value problem}, J. Difference Equ.
Appl. \textbf{12} (2006), 459--466.

\bibitem{CFM} J.A. Cid, D. Franco and F. Minhós, \textsl{\ Positive fixed
points and fourth-order equations}, Bull. Lond. Math. Soc. \textbf{41}
(2009), 72--78.

\bibitem{GKKY} J. R. Graef, L. Kong, Q. Kong and B. Yang, \textsl{Positive
solutions to a fourth order boundary value problem}, Results Math. \textbf{59%
} (2011), 141--155.

\bibitem{GL} D. Guo and V. Lakshmikantham, Nonlinear Problems in Abstract
Cones, Academic Press, 1988.

\bibitem{ES} R. Enguica and L. Sanchez, \textsl{\ Existence and localization
of solutions for fourth-order boundary value problems},\textsl{\ } Electron.
J. Differential Equations \textbf{2007 }(2007), No. 127, 1--10.

\bibitem{KZ} M.A. Krasnoselskii and P.P. Zabre\u{\i}ko, Geometrical Methods
of Nonlinear Analysis, Springer, 1984.

\bibitem{LZL} F. Li, Q. Zhang and Z. Liang, \textsl{Existence and
multiplicity of solutions of a kind of fourth-order boundary value problem},
Nonlinear Anal. \textbf{62} (2005), 803--816.

\bibitem{LZ} S. Li and X. Zhang, \textsl{\ Existence and uniqueness of
monotone positive solutions for an elastic beam equation with nonlinear
boundary conditions}, Computers and Mathematics with Applications \textbf{63}
(2012), 1355--1360.

\bibitem{Ma} T.F. Ma, \textsl{\ Positive solutions for a beam equation on a
nonlinear elastic foundation}, Math. Comput. Modelling \textbf{39} (2004),
1195--1201.

\bibitem{Pre} R. Precup, \textsl{\ Critical point theorems in cones and
multiple positive solutions of elliptic problems}, Nonlinear Anal. \textbf{75%
} (2012), 834--851.

\bibitem{Pre2} R. Precup, \textsl{\ A compression type mountain pass theorem
in conical shells}, J. Math. Anal. Appl. \textbf{338} (2008), 1116--1130.

\bibitem{S} M.I. Sonalla, \textsl{\ Vibrations of cantevier beams with
various initial conditions}, Thesis, The Ohio State University, 1989.

\bibitem{TC} S.A. Tersian and J.V. Chaparova,\textsl{\ Periodic and
homoclinic solutions of extended Fisher---Kolmogorov equation}, J. Math.
Anal. Appl. \textbf{266} (2001), 490--506.

\bibitem{YCY} L. Yang, H. Chen and X. Yang, \textsl{\ The multiplicity of
solutions for fourth-order equations generated from a boundary condition},
Appl. Math. Letters \textbf{24} (2011), 1599--1603.

\bibitem{WIF} J.R.L. Webb, G. Infante and D. Franco, \textsl{\ Positive
solutions of nonlinear fourth-order boundary-value problems with local and
non-local boundary conditions}, Proc. Roy. Soc. Edinburgh Sect. A \textbf{138%
} (2008), 427--446.
\end{thebibliography}
\end{document}